\newtheorem{thm}{Theorem}[section]
\newtheorem{lem}[thm]{Lemma}
\newtheorem{cor}[thm]{Corollary}
\newtheorem{prop}[thm]{Proposition}
\theoremstyle{definition}
\theoremstyle{remark}
\newtheorem{remark}[thm]{Remark}
\newcommand{\C}{\mathbb{C}}
\newcommand{\Z}{\mathbb{Z}}
\DeclareMathOperator{\id}{Id}
\DeclareMathOperator{\h}{h}
\DeclareMathOperator{\Dim}{dim}
\DeclareMathOperator{\Ker}{Ker}
\DeclareMathOperator{\Aut}{Aut}
\DeclareMathOperator{\Pic}{Pic}
\DeclareMathOperator{\Stab}{Stab}
\DeclareMathOperator{\PGL}{PGL}
\DeclareMathOperator{\Fix}{Fix}
\renewcommand{\P}{\mathbb{P}}
\renewcommand{\H}{H}
\newcommand{\CC}{\ensuremath{\mathbb{C}}}
\newcommand{\PP}{\ensuremath{\mathbb{P}}}
\newcommand{\ZZ}{\ensuremath{\mathbb{Z}}}
\DeclareMathOperator{\Sing}{Sing}
\newcommand{\OO}{\mathcal{O}}
\newcommand{\LL}{\mathcal{L}}
\newcommand{\ux}{\underline{x}}
\newcommand{\ls}[1]{\left | #1 \right |}
\newcommand{\set}[1]{\left \{ #1 \right \}}
\renewcommand{\span}[1]{\left < #1 \right >}
\newcommand{\Tr}{Tr}
\newenvironment{smallbmatrix}{\left [\begin{smallmatrix}}{\end{smallmatrix}\right ]}
\title[New examples of Calabi--Yau threefolds and genus zero surfaces]{New examples of Calabi--Yau threefolds\\ and genus zero surfaces}
\author{Gilberto Bini}
\address[Gilberto Bini]{Department of Mathematics, University of Milan, \hfill\newline\hfill \indent via Saldini 50,
I-20133 Milano, Italy}
\email{gilberto.bini@unimi.it}
\author{Filippo F. Favale}
\address[Filippo F. Favale]{Department of Mathematics, University of Pavia, \hfill\newline\hfill \indent via Ferrata 1,
I-27100 Pavia, Italy}
\email{filippo.favale@unipv.it}
\author{\\Jorge Neves}
\address[Jorge Neves]{CMUC, Department of Mathematics, University of Coimbra.\hfill\newline\hfill \indent Apartado 3008 - EC Santa Cruz,
3001-501 Coimbra, Portugal.}
\email{neves@mat.uc.pt}
\author{Roberto Pignatelli}
\address[Roberto Pignatelli]{Department of Mathematics, University of Trento,
\hfill\newline\hfill \indent via Sommarive 14,
I-38123 Trento, Italy}
\email{Roberto.Pignatelli@unitn.it}
\thanks{The third and fourth authors were partially supported by CMUC and FCT (Portugal) through the European program COMPETE/FEDER
and through projects PTDC/MAT/099275/2008 and PTDC/MAT/111332/2009. The first, second and fourth authors were partially supported by MIUR. The first and the fourth author were also supported by PRIN2012 "Geometria delle variet\`a algebriche" and by FIRB2012  "Moduli spaces and Applications".}
\subjclass[2000]{Primary 14J29, 14J32}
\begin{document}
\begin{abstract}
We classify the subgroups of the automorphism group of the product of $4$ projective lines admitting an
invariant anticanonical smooth divisor on which the action is free. As a first application,
we describe new examples of Calabi--Yau $3$-folds with small Hodge numbers. In particular, the Picard number is $1$ and the number of moduli is $5$. Furthermore, the fundamental group is non-trivial.
We also construct a new family of minimal surfaces of general type with geometric genus zero,  $K^2=3$ and fundamental group of order $16$. We show that this family dominates an irreducible component of dimension~$4$ of the moduli space of the surfaces of general type.
\end{abstract}
\maketitle

%\tableofcontents

\section{Introduction}
\noindent A smooth ample divisor in a Calabi--Yau $3$-fold is a minimal surface of general type.
This simple observation yields a bridge between two important classes of algebraic varieties;
a bridge that has had many applications. The most famous example is the construction of the first Calabi--Yau
$3$-fold with nonabelian fundamental group by Beauville in \cite{beauville}, obtained by
extending Reid's construction in \cite{reid} of a Campedelli surface with fundamental group
isomorphic to the group of quaternions $Q_8$. Beauville shows that the surfaces constructed by
Reid are all ``rigid ample surfaces'' (i.e., smooth ample divisors $S$ in $Y$ such that $h^0({\mathcal O}_Y(S))=1$)
in the $3$-fold he constructs.

\noindent Beauville also points out that a rigid ample surface in a Calabi--Yau $3$-fold is a surface
with $p_g=0$, which is one of the most interesting classes of surfaces of general type. He also mentions that
whereas ``for surfaces with $p_g=0$ and $K^2=1$ or $2$ we have a great deal of information\,(...)\,little
is known about surfaces with $p_g=0$ and $K^2=3,4,5$.''  He refers to Inoue's examples with fundamental group
$Q_8 \oplus (\ZZ_2)^{K^2-2}$, asking if they are rigid ample surfaces in a Calabi--Yau 3-fold. \cite{AnnSNS} proves that the answer is affirmative when $K^2=3$.

\noindent Nowadays, we know a bit more on surfaces of general type with $p_g=0$, but
not that much. We know all possible algebraic fundamental groups of a minimal surface of
general type with $p_g=0$ and $K^2=1,2$, a short list of finite groups, and the
cases of bigger order are classified. But we know very little about the next cases.
For sure, a similar result is not possible for $K^2\geq 4$, since there are examples with infinite algebraic fundamental group:
see \cite{survey} for a more precise
account on the state of the art in this research area.

\noindent There is an old standing conjecture by Miles Reid
\cite[Conjecture 4]{Milespi1} which, by a result of Mendes Lopes and
Pardini \cite[Theorem 1.2]{MP07}, would imply that all surfaces of general type with $p_g=0$ and
$K^2=3$ have finite algebraic fundamental group. If the conjecture is true,
it should be possible to extend the results of the case $K^2 \leq 2$ to this case.
In particular, one could hope to classify the surfaces with the biggest possible fundamental groups.
There is a popular conjecture - not written anywhere, but usually attributed to M. Reid - asserting
that the maximal order should be $16$.

\noindent In the literature, there are $3$ families of such surfaces with fundamental group of cardinality $16$:
one with fundamental group $Q_8 \oplus \ZZ_2$ (\cite{Burniat3} and
\cite{AnnSNS}), one with fundamental group $\ZZ_2^2 \oplus \ZZ_4$ (\cite{mp}),
one constructed very recently (\cite{pg0new}) with fundamental group the
central product of the dihedral group with $8$ elements and $\ZZ_4$.
The first two families dominate irreducible components of the moduli space
(\cite{Burniat3}, \cite{Yifan}). In this paper, we construct a fourth family,
dominating an irreducible component.
More examples of surfaces with $p_g=0$ and $K^2=3$ have been recently constructed
in \cite{prodquot}, \cite{cartsteg}, \cite{PPS8}, \cite{PPS9}, \cite{KL} and \cite{Rito}. In all these examples 
the fundamental group is smaller or unknown.
\smallskip

\noindent On the Calabi--Yau side, physicists have focused recently on Calabi--Yau's with small Hodge
numbers  $(\h^{1,1},\h^{1,2})$: see, for instance, \cite{2}, \cite{4}, \cite{3}, \cite{6}, \cite{9} and \cite{BDS}.
In \cite{BF}, the authors describe some new examples of Calabi-Yau varieties. They are given as quotients of anticanonical sections of Fano varieties by finite groups $G$ acting freely. The Fano varieties are products of del Pezzo surfaces of various degrees. In particular, for the product $X$ of four complex projective lines, there exists a Calabi-Yau $Y$ with Hodge numbers 
$(\h^{1,1},\h^{1,2})=(1,5)$ and fundamental group isomorphic to $\Z_8\oplus\Z_2$.

\noindent In \cite{BF} an upper bound on the order of $G$ - depending on $X$ - has been found. This bound is maximal (equal to $16$) if and only if $X=\P^1\times \P^1\times \P^1\times \P^1$. It is then natural to ask which finite groups - among those of order $16$ - yield free and smooth quotients of Calabi-Yau manifolds. These quotients are again Calabi-Yau threefolds but have smaller height and non-trivial fundamental group. In this paper we investigate all possible actions, and come up with new non-isomorphic examples.

\smallskip
\noindent The main results of this work are the following. We contruct new families of Calabi--Yau
manifolds with small Hodge numbers. More precisely, we construct $4$ families of Calabi--Yau $3$-folds with fundamental
group of order $16$ and Hodge numbers $\h^{1,1}=1$, $\h^{1,2}=5$. We show that for two of these families, no
Calabi--Yau in the family contains a rigid ample divisor with $K^2=3$. On the contrary, in the other $2$
cases such a divisor exists, giving $2$ families of surfaces of general type with $p_g=0$.
One of these families is the family studied in \cite{AnnSNS}. The other family is a family of minimal surfaces of general type with $p_g=0$,
$K^2=3$ and fundamental group $\ZZ_4 \ltimes \ZZ_4$: there is no example of a surface with the same topological type in the literature.
We also show that this family dominates an irreducible component of the moduli space of the surfaces of general type.

\smallskip
\noindent The method is the following. Consider the $4$-fold $X=\P^1\times \PP^1\times \PP^1\times \PP^1$.
Assume that $G$ is a finite subgroup of $\Aut(X)$, and let $Y$ be a smooth divisor
in $|O_Y(2,2,2,2)|$, which is $G$-invariant and such that the action of $G$ on $Y$ is free.
Then the quotient $Y/G$ is also a smooth Calabi--Yau $3$-fold. In this situation we say that $(Y,G)$ is an admissible pair.
\smallskip

\noindent We classify all subgroups $G$ of $\Aut(X)$ which appear in an
admissible pair $(Y,G)$.  More precisely, we see that for each isomorphism class of groups, there is,
up to conjugacy, at most one possible subgroup of $\Aut(X)$, which may form an admissible pair.
We determine the groups which appear in an admissible pair, and we give
examples of admissible pairs in each case. The Hodge numbers of $Y/G$ depend only on $G$,
and we compute them in all cases.
\smallskip

\noindent This classification leads to exactly $4$ groups of order $16$.
In the two Abelian cases, we show that $G$ does not act on any divisor in $|O_X(1,1,1,1)|$;
in other words, $Y/G$ has no rigid ample divisor with $K^2=3$.
In the case of $G=\ZZ_4 \ltimes \ZZ_4$, such a divisor exists, yielding a $4$-dimensional
family of surfaces of general type with fundamental group $G$.
We show that this family dominates an irreducible component of the moduli space.
A similar result holds in the last case $G=\ZZ_2 \oplus Q_8$; we skipped this case because
that family was already completely studied in \cite{AnnSNS} and \cite{Burniat3}.

% --------------------------------------------------------------------
% --------------------------------------------------------------------
% --------------------------------------------------------------------

\section{Automorphisms of $X=\P^1\times \PP^1\times \PP^1 \times \PP^1$}

Every $g\in \Aut(X)$ acts on the $4$ factors (see, for instance, \cite{BF}) giving a  surjective homomorphism
$\pi\colon \Aut(X) \rightarrow S_4$ with kernel $\PGL(2)^{\times 4}$. On the other hand the permutations of the 
factors give an inclusion $S_4 \hookrightarrow \Aut(X)$ splitting $\pi$ and therefore giving a structure of semidirect product
$$
\Aut(X) \cong S_4 \ltimes \PGL(2)^{\times 4} 
$$
Concretely this gives, $\forall g \in \Aut X$, a unique decomposition $g=(A_i)\circ \sigma$ where $\sigma=\pi(g)$, 
$(A_i)=(A_1,A_2,A_3,A_4)\in \PGL(2)^{\times 4}$, and $\sigma(A_i)\sigma^{-1}=(A_{\sigma(i)})$.

%If we denote by $\underline{x}_i=(x_{i0}:x_{i1})$ a point on the $i$-th $\P^1$, $g=(A_i)\circ \sigma$ acts on $X$ by
%$g(\underline{x}_i)=(A_i\underline{x}_{\sigma(i)})$.
If $g=(A_i)\circ \sigma$ and $h=(B_i)\circ\tau$,
$$h\circ g = (B_iA_{\tau(i)})\circ(\tau\circ\sigma),$$%,\quad h^{-1}=(B_{\tau^{-1}(i)}^{-1})\circ\tau^{-1}\text{ and}$$
$$h\circ g\circ h^{-1}= (B_iA_{\tau(i)}B_{(\tau \circ \sigma\circ \tau^{-1})(i)}^{-1})\circ (\tau\circ\sigma\circ\tau^{-1}).$$
%From now on, for any element $C\in \PGL(2)$ we will denote by $C$ the quadruple $(C,C,C,C)$ in $\Aut(X)$ unless otherwise stated.
%\smallskip
%
%\noindent The formula $\pi(g)=\sigma$ defines then a.
\smallskip

\noindent For the purpose of what follows, we will denote by $A$ and $B$ the automorphisms of $\P^1$ that are represented,
respectively, by $$(x_0:x_1)\mapsto(x_0:-x_1)\quad\mbox{ and }\quad(x_0:x_1)\mapsto(x_1:x_0),$$ where
$(x_0:x_1)$ are projective coordinates on $\P^1$.
It is easy to see that $A$ and $B$ have order $2$. The group $\span{B}$ generated by $B$ is conjugated to the group $\span{A}$
generated by $A$ and this is true for
every subgroup of $\PGL(2)$ of order $2$. In general, the following holds true.
\begin{thm}[Klein]
\label{THM:KLEIN}
If $G$ is a finite subgroup of $\PGL(2)$, then $G$ is isomorphic to $\Z_n$, $D_{2n}$, $A_4$, $S_4$ or $A_5$.
Moreover, two isomorphic finite subgroups are conjugate.
\end{thm}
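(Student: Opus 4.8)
The plan is to reduce everything to the rigid geometry of the sphere, by first conjugating $G$ into the rotation group, and then to treat the two assertions — the list of isomorphism types and the uniqueness up to conjugacy — separately. Lifting $G$ along the double cover $\mathrm{SL}(2)\to\PGL(2)$ (which, over $\C$, is an isomorphism after quotienting by $\pm\id$) produces a finite subgroup $\widetilde{G}\subseteq\mathrm{SL}(2)$ of order $2|G|$, containing $-\id$. By Weyl's unitary trick, averaging the standard Hermitian form over the finite group $\widetilde{G}$ yields a $\widetilde{G}$-invariant positive definite form, and choosing coordinates in which it is standard conjugates $\widetilde{G}$ into $\mathrm{SU}(2)$, hence $G$ into $\mathrm{PSU}(2)\cong\mathrm{SO}(3)$. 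Thus every finite subgroup of $\PGL(2)$ is conjugate to a subgroup of the rotation group, which is the step that turns the algebraic problem into a geometric one.

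For the list of isomorphism types I would run Klein's counting-of-poles argument on $G\subseteq\mathrm{SO}(3)$. Every non-trivial element is a rotation fixing exactly two antipodal points of $S^2$ (its poles), and counting the incidences (non-trivial element, fixed pole) in two ways via the orbit--stabilizer theorem gives
\[
2-\frac{2}{|G|}\;=\;\sum_{i}\left(1-\frac{1}{e_i}\right),
\]
where the sum runs over the orbits of poles and $e_i$ is the order of a point stabilizer in the $i$-th orbit. Since each summand lies in $[\tfrac12,1)$ while the left-hand side lies in $[1,2)$, the number of orbits is forced to be $2$ or $3$; solving the resulting finite list of Diophantine possibilities (two orbits give the cyclic case, and the triples $(2,2,n)$, $(2,3,3)$, $(2,3,4)$, $(2,3,5)$ give the rest) produces exactly $\Z_n$, $D_{2n}$, $A_4$, $S_4$ and $A_5$.

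The uniqueness up to conjugacy is the technical heart, and I expect it to be the main obstacle. The cyclic case is immediate: a generator of $\Z_n$ is a finite-order, hence semisimple, element of $\PGL(2)$, so it is diagonalizable; placing its two fixed points at $0$ and $\infty$ realizes the subgroup as the standard group of $n$-th roots of unity, independently of the chosen generator. For the remaining groups I would work inside $\mathrm{SO}(3)$ and show that the combinatorial data of the pole orbits — their number, sizes and stabilizer orders, all read off from the Diophantine solution — together with the angles between poles is determined by the isomorphism type, so that two abstract copies have congruent pole configurations and are therefore intertwined by a rotation. Concretely, for $D_{2n}$ with $n\ge3$ one conjugates its unique cyclic subgroup of order $n$ to the rotations about the $z$-axis and then normalizes the remaining involution to a rotation about the $x$-axis (the abelian case $D_4\cong\Z_2^2$ being handled directly), while for $A_4$, $S_4$ and $A_5$ one invokes the rigidity of the inscribed regular polyhedron whose rotational symmetries the group realizes. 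The delicate point is precisely that the pole combinatorics rigidifies the geometric position and leaves no moduli — this is the uniqueness of the Platonic solids up to rotation — after which, conjugating $G_1$ and $G_2$ into $\mathrm{SO}(3)$ and applying this rigidity exhibits them as conjugate within $\mathrm{SO}(3)$, hence within $\PGL(2)$.
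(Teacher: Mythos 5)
The paper offers no proof of this statement to compare yours against: it is quoted as Klein's classical theorem and used as a black box (what is actually invoked later is both halves --- the list, to rule out $\Z_2^{\oplus 3}$ inside $\PGL(2)$, and the conjugacy statement, e.g.\ that every involution is conjugate to $A$ and every copy of $\Z_2\oplus\Z_2$ to $\langle A,B\rangle$). So your proposal can only be judged on its own merits, and it is the standard classical proof, essentially correct. The reduction to the rotation group is sound: over $\C$ the preimage of $G$ in $\mathrm{SL}(2)$ is finite of order $2|G|$, averaging a Hermitian form conjugates it into $\mathrm{SU}(2)$, and $\mathrm{PSU}(2)\cong\mathrm{SO}(3)$. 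The pole-counting identity $2-\tfrac{2}{|G|}=\sum_i\bigl(1-\tfrac{1}{e_i}\bigr)$ and the resulting Diophantine analysis (two orbits giving the cyclic case; $(2,2,n)$, $(2,3,3)$, $(2,3,4)$, $(2,3,5)$ giving the rest) are exactly Klein's engine.

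On the uniqueness half, the points you flag as delicate are indeed where the work lies, but each closes without obstruction, by arguments you have correctly anticipated: the cyclic case by diagonalization; for $D_{2n}$ with $n\geq 3$ the index-two rotation subgroup is characteristic, and any involution outside it inverts that subgroup, hence has axis perpendicular to the rotation axis and can be spun onto the $x$-axis; commuting involutions of $\mathrm{SO}(3)$ automatically have mutually orthogonal axes, which settles $\Z_2\oplus\Z_2$; and for $A_4$, $S_4$, $A_5$ the pole orbit with maximal stabilizer order is the vertex set of a regular tetrahedron, octahedron, icosahedron inscribed in the sphere, two of which differ by an element of $\mathrm{O}(3)$ that can be corrected to $\mathrm{SO}(3)$ by composing with a reflection symmetry (tetrahedron) or the central symmetry (the other two solids). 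So the plan is complete in outline; filling in these standard rigidity facts would turn it into a full proof of the theorem the paper takes for granted.
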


For every subgroup $G$ of $\Aut(X)$ we will denote by $\Fix(G)$ the set of the points of $X$ which are fixed by some nontrivial elements in $G$.

\begin{remark}\label{REM:DIMFIX} We are interested in automorphisms $g\in \Aut(X)$ whose fixed locus is
disjoint from the zero locus of a suitable section of $O_X(-K_X)$.
This implies \mbox{$\Dim\Fix\span{g}=0$}. In fact $-K_X$ is ample and therefore, if $C$ is a curve in
$\Fix\span{g}$, then $C\cdot Y >0$ so there is at least one fixed point of $\span{g}$ lying on $Y$.
On the other hand, $\Fix\span{g}$ is not empty by holomorphic Lefschetz fixed point formula.
\end{remark}
For the holomorphic fixed point formula we refer the reader to \cite[Theorem 4.12]{AB}. 
For the convenience of the reader, we recall here the following corollary which we will use.
\begin{thm}[Corollary of the holomorphic Lefschetz fixed point formula] 
Let $X$ be a compact complex manifold and let $f\in \Aut(X)$ be an automorphism with at most a finite number of isolated nondegenerate fixed points.
If we write $f^*|_{\H^{0,q}(X)}$ for the endomorphism induced by $f$ on $H^{0,q}(X)$, then
$$
\sum_{x\in \Fix(f)}\frac{1}{\det(I-d_xf)}=\sum_q (-1)^q \Tr(f^*|_{\H^{0,q}(X)}).
$$
\end{thm}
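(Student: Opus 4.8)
The plan is to obtain this corollary as a direct specialization of the general Atiyah--Bott holomorphic Lefschetz fixed point formula (\cite[Theorem 4.12]{AB}) to the trivial line bundle. Recall that the general statement applies to a holomorphic vector bundle $E$ on $X$ together with a lift of $f$ to a bundle endomorphism $\varphi\colon f^*E\to E$; when all fixed points of $f$ are nondegenerate (that is, $\det(I-d_xf)\neq 0$, which forces them to be isolated), it reads
$$
\sum_q(-1)^q\Tr\left(f^*|_{\H^q(X,E)}\right)=\sum_{x\in\Fix(f)}\frac{\Tr(\varphi_x)}{\det(I-d_xf)},
$$
where $\varphi_x$ denotes the induced endomorphism of the fibre $E_x$ and $d_xf$ is the complex differential on the holomorphic tangent space $T_xX$.

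First I would take $E=\O_X$, the structure sheaf regarded as the trivial line bundle, and equip it with the canonical lift of $f$, namely pullback of holomorphic functions. With this choice the Dolbeault isomorphism identifies $\H^q(X,\O_X)$ with $\H^{0,q}(X)$, and the map induced on cohomology by the canonical lift is exactly $f^*|_{\H^{0,q}(X)}$, so the left-hand side of the general formula becomes the right-hand side of the statement.

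Next I would analyse the local contribution at each fixed point. Since $\O_X$ is trivial of rank one and the lift acts on sections by pullback, the induced map $\varphi_x$ on the one-dimensional fibre is the identity, whence $\Tr(\varphi_x)=1$. Substituting this into the general formula collapses all numerators to $1$ and yields precisely
$$
\sum_{x\in\Fix(f)}\frac{1}{\det(I-d_xf)}=\sum_q(-1)^q\Tr\left(f^*|_{\H^{0,q}(X)}\right),
$$
as required.

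The only points requiring care --- and hence the main (if modest) obstacle --- are bookkeeping ones: checking that the hypothesis of finitely many isolated nondegenerate fixed points coincides with the Atiyah--Bott simplicity condition $\det(I-d_xf)\neq 0$, and verifying that the canonical lift on $\O_X$ induces $f^*$ on Dolbeault cohomology while acting as the identity on fibres. Once these identifications are in place, no further computation is needed.
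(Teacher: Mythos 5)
Your proposal is correct and follows essentially the same route as the paper: the paper states this result as a corollary of the Atiyah--Bott holomorphic Lefschetz fixed point formula \cite[Theorem 4.12]{AB} without spelling out the derivation, and your specialization to $E=\O_X$ with the canonical pullback lift (so that $\Tr(\varphi_x)=1$ at each fixed point and $H^q(X,\O_X)\cong \H^{0,q}(X)$ by Dolbeault) is exactly the intended argument. Your bookkeeping remarks --- that nondegeneracy $\det(I-d_xf)\neq 0$ is Atiyah--Bott's simplicity condition and forces the fixed points to be isolated, hence finite in number by compactness --- are also accurate.
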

In our case, since for $X=(\P^1)^4$ one has $H^{0,*}(X)=H^{0,0}(X)\cong \mathbb{C}$, the right side
of the equation is equal to $1$. This implies that the left side has to be different from zero and, in particular, it is necessary to have at least a fixed point to have a contribution.

\smallskip 

It is shown in \cite{BF} that if $\Fix(G)$ does not intersect an anticanonical divisor,
then $|G|$ divides $16$. We have then $4$ cases to consider: groups of order $2$, $4$, $8$ or $16$.

% --------------------------------------------------------------------
% --------------------------------------------------------------------
% --------------------------------------------------------------------

\subsection{Subgroups of order $2$}\ \newline
The condition in Remark~\ref{REM:DIMFIX} is a very restrictive condition for an automorphism of order $2$.
In fact, up to conjugacy, there is only one
involution of $\Aut(X)$ with $0$-dimensional fixed locus, as the following lemma shows.

\begin{lem}
\label{LEM:ORDER2}
Assume that $g\in\Aut(X)$ is an automorphism of order $2$ such that $\Fix(g)$ has dimension $0$. Then $\pi(g)=\id$ and $g$ is conjugate to
$(A,A,A,A)$.
\end{lem}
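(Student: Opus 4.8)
The plan is to write $g=(A_i)\circ\sigma$ with $\sigma=\pi(g)$ and to exploit that $g$ has order $2$. Squaring via the composition formula gives $g^2=(A_iA_{\sigma(i)})\circ\sigma^2=\id$, so $\sigma^2=\id$ and $A_{\sigma(i)}=A_i^{-1}$ for every $i$. In $S_4$ the involutions are exactly the transpositions and the double transpositions, hence $\sigma$ is either $\id$, a transposition, or a product of two disjoint transpositions. I would establish the statement by computing $\Fix(g)$ in each of these three cases and showing that the hypothesis $\Dim\Fix(g)=0$ forces $\sigma=\id$, after which a single conjugation normalizes $g$.

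First I would write down the fixed-point condition: $P=(p_1,p_2,p_3,p_4)$ lies in $\Fix(g)$ precisely when $A_ip_{\sigma^{-1}(i)}=p_i$ for every $i$. The key observation concerns a $2$-cycle $(i\,j)$ occurring in $\sigma$: there the conditions read $A_ip_j=p_i$ and $A_jp_i=p_j$, and using the order-$2$ relation $A_j=A_i^{-1}$ the second equation becomes a consequence of the first. Thus $p_j\in\P^1$ is a free parameter while $p_i=A_ip_j$ is determined by it, so each transposition appearing in $\sigma$ contributes a whole $\P^1$ to the fixed locus. Since every element of $\PGL(2)$ has a fixed point on $\P^1$, the remaining coordinates can always be completed to an actual fixed point of $g$; therefore $\Dim\Fix(g)\ge 1$ whenever $\sigma\neq\id$. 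This excludes both the transposition and the double transposition cases and leaves $\pi(g)=\id$.

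It then remains to treat $\sigma=\id$. Here the order-$2$ condition gives $A_i^2=\id$ and the fixed-point equations become $A_ip_i=p_i$. If some $A_i$ were the identity, the $i$-th factor would be free and $\Fix(g)$ would be positive dimensional; hence every $A_i$ is a nontrivial involution of $\PGL(2)$. By Theorem~\ref{THM:KLEIN} each such $A_i$ is conjugate to $A$, say $C_iA_iC_i^{-1}=A$ for suitable $C_i\in\PGL(2)$. Conjugating $g$ by $(C_1,C_2,C_3,C_4)\in\PGL(2)^{\times 4}$ and applying the conjugation formula with $\tau=\id$ yields $(C_iA_iC_i^{-1})\circ\id=(A,A,A,A)$, which is the desired conclusion.

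The hard part will be the fixed-locus analysis when $\sigma$ contains a transposition: the essential point is that the order-$2$ relation $A_j=A_i^{-1}$ collapses the two coupled equations attached to a $2$-cycle into a single one, freeing an entire $\P^1$ of fixed points and so violating $\Dim\Fix(g)=0$. Everything else---the reduction of the $S_4$-part to three types of involution and the final simultaneous conjugation through Klein's theorem---is routine once this mechanism is identified.
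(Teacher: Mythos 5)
Your proof is correct and follows essentially the same route as the paper's: reduce $\pi(g)$ to an involution via $g^2=(A_iA_{\sigma(i)})\circ\sigma^2$, show that any $2$-cycle in $\sigma$ forces a curve of fixed points (the graph $\{(p_j,A_ip_j)\}$, completed using fixed points of the remaining $A_k$'s), and then normalize the case $\sigma=\id$ by Klein's theorem and a componentwise conjugation. The only cosmetic differences are that you treat an arbitrary $2$-cycle uniformly instead of first normalizing $\sigma$ to $(12)$ or $(12)(34)$, and you use the elementary fact that every element of $\PGL(2)$ fixes a point of $\P^1$ where the paper cites the holomorphic Lefschetz formula.
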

\begin{proof}
If $g=(A_i)\circ \pi(g)$ has order $2$ then $\sigma:=\pi(g)$ has order at most $2$ and, since
$g^2=(A_iA_{\sigma(i)})\circ \sigma^2$, we have the relations $A_iA_{\sigma(i)} = \id$ in $\PGL(2)$. Up to conjugation, we can assume
$\sigma\in\{\id,(12),(12)(34)\}$. If $\sigma=(12)$, let $\underline{x}_3$ and $\underline{x}_4$ be two fixed points of
$A_3$ and $A_4$, respectively (the existence of which is guaranteed by the holomorphic Lefschetz fixed point formula).
Then, every point of the form
$$P_{\underline{x}_1}:=(\underline{x}_1,A_2\underline{x}_1,\underline{x}_3,\underline{x}_4),\quad \underline{x}_1\in\P^1,$$
is fixed. Indeed, since $A_1A_2=\id$, we get
$$P_{\underline{x}_1}\mapsto (A_1(A_2\underline{x}_1),A_2\underline{x}_1,A_3\underline{x}_3,A_4\underline{x}_4)
=((A_1A_2)\underline{x}_1,A_2\underline{x}_1,\underline{x}_3,\underline{x}_4)=P_{\underline{x}_1}.$$
Similarly, if $\sigma=(12)(34)$, the $2$-dimensional locus consisting of the points of the form
$(\underline{x}_1,A_2\underline{x}_1,\underline{x}_2,A_4\underline{x}_2)$
is pointwise fixed.

\vspace{2mm}

\noindent Accordingly, we may assume that $\sigma=\pi(g)=\id$, hence $g=(A_i)$ with (since $g^2=\id$) $A_i^2=\id$,
for all $i$. If $A_i=\id$ for some $i$,
one has at least a curve of fixed points (namely the $i$-th $\P^1$), so every $A_i$ has order $2$ as an
element of $\PGL(2)$. By Theorem \ref{THM:KLEIN}, all $A_i$ are conjugated in $\PGL(2)$ to $A$, so there exists $B_i\in \PGL(2)$ such that $B_i^{-1}A_iB_i=A$. Then  $(B_i)^{-1}\circ g\circ (B_i)=(A,A,A,A)$.
\end{proof}

\noindent Note that if $H$ is a nontrivial subgroup of $G$, then $\Fix(H) \subset \Fix(G)$.
In particular, if $\dim \Fix(G)=0$, every element of order $2$ in $G$ belongs to $\Ker(\pi)$.

% --------------------------------------------------------------------
% --------------------------------------------------------------------
% --------------------------------------------------------------------

\subsection{Subgroups of order $4$}\ \newline
Up to isomorphism, there are $2$ groups of order $4$, namely $\Z_4$ and $\Z_2\oplus\Z_2$.

\begin{lem}\label{LEM:Z2xZ2}
Assume that $\Z_2\oplus\Z_2\simeq G\leq \Aut(X)$ satisfies $\Dim \Fix(G)=0$. Then $G$ is
conjugated to $\span{g,h}$ with $g=(A,A,A,A)$ and $h=(B,B,B,B)$.
\end{lem}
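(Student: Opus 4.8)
The plan is to reduce the problem to understanding a pair of commuting involutions, each of which (by Lemma~\ref{LEM:ORDER2}) is conjugate to $(A,A,A,A)$, and then to conjugate them \emph{simultaneously} into the desired normal form.

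First I would observe that by the remark following Lemma~\ref{LEM:ORDER2}, since $\Dim\Fix(G)=0$, every element of order $2$ in $G$ lies in $\Ker(\pi)=\PGL(2)^{\times 4}$; in particular all three nontrivial elements of $G\cong\Z_2\oplus\Z_2$ are of the form $(A_i)$ with each $A_i$ an involution in $\PGL(2)$ (no $A_i$ can be the identity, else a whole $\P^1$ would be fixed). So write $g=(A_1,A_2,A_3,A_4)$ and $h=(C_1,C_2,C_3,C_4)$ with all $A_i,C_i$ of order $2$; since $g,h$ commute and $\pi(g)=\pi(h)=\id$, the products commute factor-by-factor, i.e. $A_iC_i=C_iA_i$ in $\PGL(2)$ for each $i$.

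Next I would analyse, in a single factor, two commuting involutions $A_i,C_i$ of $\PGL(2)$. The key fact is that two distinct commuting involutions in $\PGL(2)$ generate a $\Z_2\oplus\Z_2$ whose third involution is their product, and any such Klein four-group is conjugate to the \emph{standard} one $\langle A,B\rangle$, where $A\colon(x_0:x_1)\mapsto(x_0:-x_1)$ and $B\colon(x_0:x_1)\mapsto(x_1:x_0)$ (this is the image of $\Z_2\oplus\Z_2$ under Klein's Theorem~\ref{THM:KLEIN}, unique up to conjugacy). The per-factor conjugator can be chosen to send $A_i\mapsto A$ and $C_i\mapsto B$. I would then worry about the degenerate possibility $A_i=C_i$ for some $i$: if this held for all $i$ then $g=h$, contradicting $|G|=4$, and if it held for some but not all factors one would need to check it does not produce positive-dimensional fixed locus — in fact the hypothesis $\Dim\Fix(G)=0$ is exactly what rules out $A_i=C_i$, since a coincidence in a factor lets the third element $gh=(A_iC_i)$ have $A_iC_i=\id$ in that slot and hence fix a curve. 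This $0$-dimensionality bookkeeping, applied to all three nontrivial elements at once, is the point I expect to require the most care.

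Having pinned down that in \emph{each} factor the pair $(A_i,C_i)$ is (after an independent conjugation $B_i\in\PGL(2)$) the standard pair $(A,B)$, I would assemble the four conjugators into $(B_1,B_2,B_3,B_4)\in\PGL(2)^{\times 4}\subset\Aut(X)$ and conclude that simultaneously $(B_i)^{-1}g\,(B_i)=(A,A,A,A)$ and $(B_i)^{-1}h\,(B_i)=(B,B,B,B)$, exactly as in the final step of Lemma~\ref{LEM:ORDER2}. Thus $G$ is conjugate to $\langle(A,A,A,A),(B,B,B,B)\rangle$, as claimed. The main obstacle is the simultaneous (rather than factor-wise independent) normalization together with ensuring the hypothesis $\Dim\Fix(G)=0$ forces $A_i\neq C_i$ in every factor; once those are settled, the assembly is routine.
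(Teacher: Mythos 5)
Your proposal follows essentially the same route as the paper's proof: reduce via Lemma~\ref{LEM:ORDER2} to two commuting $4$-tuples of involutions in $\Ker(\pi)$, use $\Dim\Fix(G)=0$ to rule out any coordinate in which $A_i$, $C_i$ or $A_iC_i$ is the identity, and then conjugate factor by factor into the standard pair.

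The one step you assert without justification is exactly the one you flag as the main obstacle: that the per-factor conjugator can be chosen to send $A_i\mapsto A$ \emph{and} $C_i\mapsto B$. Klein's Theorem~\ref{THM:KLEIN} only produces a conjugation carrying the subgroup $\span{A_i,C_i}$ onto $\span{A,B}$; a priori it could send $A_i$ to $B$ or to $AB$, and if these assignments come out differently in different factors, the conjugated group is not $\span{(A,A,A,A),(B,B,B,B)}$ --- it could contain, say, $(A,B,A,AB)$, which does not lie in that group. The paper closes this by noting that the automorphisms of $\PGL(2)$ preserving $\span{A,B}$ act on $\{A,B,AB\}$ as the full permutation group, so each factor's conjugator can be corrected to realize the prescribed assignment; alternatively, one can argue directly that an involution commuting with $A$ and distinct from it must be antidiagonal, hence is carried to $B$ by a diagonal conjugation fixing $A$. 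With that single fact supplied, your proof is complete and structurally identical to the paper's.
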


\begin{proof}
Take two non-trivial elements $g,h\in G$. These have order $2$ and a finite number of fixed points.
By Lemma \ref{LEM:ORDER2}, we have $\pi(g)=\pi(h)=\id$ so $g=(A_i)$ and $h=(B_i)$. Since $g^2=h^2=ghg^{-1}h^{-1}=\id$, one obtains $A_iB_i=B_iA_i$
and $A_i^2=B_i^2=\id$, hence $\span{A_i,B_i}\leq \PGL(2)$ is isomorphic to a subgroup of $\Z_2\oplus\Z_2$. If this is a
proper subgroup of $\Z_2\oplus\Z_2$, then either $A_i,B_i$ or $A_iB_i$ is the identity and so one of $g$, $h$ or $gh$ has at
least a line of fixed points, which contradicts the assumptions. By Theorem
\ref{THM:KLEIN}, $\span{A_i,B_i}$ is conjugated to $\span{A,B}$ in $\PGL(2)$.
Moreover the internal automorphisms of $\PGL(2)$ which fix the subgroup $\span{A,B}$ act on $\{A,B,AB\}$
as the full group of permutations. Therefore there is a
$C_i\in \PGL(2)$ such that $C_i^{-1}A_iC_i=A$, $C_i^{-1}B_iC_i=B$;
setting $k:=(C_i)$, we have $k^{-1}\span{g,h}k=\span{(A),(B)}$.
\end{proof}

\begin{lem}
\label{LEM:ORDER4}
Assume that $g=(A_i)\circ \sigma\in \Aut(X)$ has order $4$ and that there exists an eigensection $s\in\H^0(X,O_X(-K_X))$
of $g$ such that $V(s)\cap\Fix\span{g}=\emptyset$. Then $g$ is conjugated to $(\id,A,\id,A)\circ (12)(34)$.
\end{lem}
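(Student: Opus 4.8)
The plan is to first pin down the permutation type of $\sigma:=\pi(g)$, then use the eigensection $s$ to exclude the types $\id$ and $(12)$ by a single mechanism, and finally to normalise $g$ when $\sigma=(12)(34)$.

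First I would record that $V(s)\in|O_X(-K_X)|$ is $\span{g}$-invariant, since $g^*s=\lambda s$ gives $g(V(s))=V(s)$. Arguing as in Remark~\ref{REM:DIMFIX} (using that $-K_X$ is ample, so any curve $C\subseteq\Fix\span{g}$ meets $V(s)$), the hypothesis $V(s)\cap\Fix\span{g}=\emptyset$ forces $\Dim\Fix\span{g}=0$. Then $g^2$ is an order-$2$ element of $\span{g}$, so by the remark following Lemma~\ref{LEM:ORDER2} we have $\pi(g^2)=\sigma^2=\id$; up to conjugacy by a permutation I may thus assume $\sigma\in\{\id,(12),(12)(34)\}$. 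Moreover $\Fix(g^2)\subseteq\Fix\span{g}$ yields $\Dim\Fix(g^2)=0$, so by Lemma~\ref{LEM:ORDER2} the element $g^2=(A_iA_{\sigma(i)})$ is conjugate to $(A,A,A,A)$; in particular each component $A_iA_{\sigma(i)}$ has order $2$. Whenever $\sigma(i)=i$ this says $A_i^2$ has order $2$, hence $A_i$ has order $4$.

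The heart of the argument is to rule out $\sigma=\id$ and $\sigma=(12)$ at once: in both cases there are two indices fixed by $\sigma$ — namely $3$ and $4$ — with $A_3,A_4$ of order $4$. After conjugating in the third and fourth factors (Theorem~\ref{THM:KLEIN}) I may take $A_3=A_4=D$, where $D$ fixes $p_0=(1:0)$ and $p_\infty=(0:1)$ and acts on $H^0(\P^1,O(2))=\span{m_0,m_1,m_2}$ by characters $m_c\mapsto i^c$ (up to one overall scalar), so that the characters on $m_0$ and $m_2$ have ratio $-1$. Since $\sigma$ preserves $\{1,2\}$ and fixes $3,4$, the $g$-action on $H^0(X,O_X(-K_X))$ is the tensor product of the induced action $g|_{12}$ on factors $1,2$ with the two $D$-actions. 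Writing $s=\sum_{c,d}s_{cd}\,m_c(\ux_3)\,m_d(\ux_4)$ with $s_{cd}\in H^0(\P^1\times\P^1,O(2,2))$, comparing $g^*s=\lambda s$ componentwise shows each $s_{cd}$ is a $g|_{12}$-eigensection, with eigenvalues $\mu_{cd}$ satisfying $\mu_{00}/\mu_{02}=-1$. Now $g|_{12}$ has a fixed point $Q$ (its fixed locus on $\P^1\times\P^1$ is a nonempty finite set, e.g. by the holomorphic Lefschetz formula), so $(Q,p_0,p_0)$ and $(Q,p_0,p_\infty)$ lie in $\Fix\span{g}$ and $s$ must be nonzero at both. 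But $m_c(p_0)\neq0$ only for $c=0$ and $m_d(p_\infty)\neq0$ only for $d=2$, so $s(Q,p_0,p_0)=s_{00}(Q)$ and $s(Q,p_0,p_\infty)=s_{02}(Q)$ up to nonzero scalars. Evaluating $g|_{12}^*s_{cd}=\mu_{cd}s_{cd}$ at the fixed point $Q$ gives $\mu_{cd}\,s_{cd}(Q)=\rho\,s_{cd}(Q)$, where $\rho$ is the single character of $g|_{12}$ on the fibre $O(2,2)_Q$; hence $s_{cd}(Q)\neq0$ forces $\mu_{cd}=\rho$. Requiring both $s_{00}(Q)\neq0$ and $s_{02}(Q)\neq0$ would give $\rho=-\rho$, a contradiction, which eliminates $\sigma=\id$ and $\sigma=(12)$. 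I expect this eigenvalue bookkeeping to be the main obstacle, the delicate point being the passage from the global eigenvalues $\mu_{cd}$ to the fibre character $\rho$ at $Q$.

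Finally, for $\sigma=(12)(34)$ the conclusion follows by direct normalisation. Conjugating by $h=(C_i)\in\PGL(2)^{\times4}$ replaces the tuple by $(C_iA_iC_{\sigma(i)}^{-1})$; since $A_1A_2$ and $A_3A_4$ have order $2$, I choose $C_1$ with $C_1(A_1A_2)C_1^{-1}=A$ and set $C_2=C_1A_1$, making the first two components $\id$ and $A$, and the symmetric choice on factors $3,4$ makes the last two $\id$ and $A$. Thus $g$ is conjugate to $(\id,A,\id,A)\circ(12)(34)$, as claimed.
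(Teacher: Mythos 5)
Your proof is correct and follows essentially the same route as the paper's: reduce to $\sigma\in\{\id,(12),(12)(34)\}$ using Lemma~\ref{LEM:ORDER2}, eliminate $\id$ and $(12)$ because an eigensection cannot be nonvanishing at two fixed points of $\span{g}$ where the relevant characters disagree, and finish with an explicit conjugation in the case $(12)(34)$. The only difference is organizational: the paper first normalises $g^2=(A,A,A,A)$ via $(A)=h^{-1}g^2h=(h^{-1}gh)^2$, so that $\Fix\span{g}$ becomes the $16$ coordinate points and the character clash is read off from the monomials $x_{10}^2x_{20}^2x_{30}^2x_{40}^2$ and $x_{10}^2x_{20}^2x_{30}^2x_{41}^2$, whereas you keep $g^2$ unnormalised and phrase the same clash via fibre characters at a fixed point of the action on the first two factors.
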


\begin{proof}
Denote $G:=\span{g}\simeq \Z_4$. By Remark~\ref{REM:DIMFIX}, $\Dim\Fix(G)=0$. Since $o(g)=4$, we get
$\Fix(G)=\Fix(g^2)$, so $\Dim\Fix(g^2)=0$. Hence by Lemma~\ref{LEM:ORDER2}, $g^2$ is conjugated to $(A,A,A,A)$. Since
$$(A)=h^{-1}g^2h=(h^{-1}gh)^2,$$ we may assume, up to conjugation, that
$g^2=(A,A,A,A)$. Thus $g=(A_1,A_2,A_3,A_4)\circ \sigma$ with $\sigma^2=1$ and $A_iA_{\sigma(i)}=A$.
By conjugation, we may assume
$\sigma\in \{\id, (12),(12)(34)\}$. The fixed points of $G$ are $$\Fix(G)=\{(P_1,P_2,P_3,P_4) : P_i\in
\{(1:0),(0:1)\}\}.$$

\noindent
Let us now show that if $\sigma\in\{\id, (12)\}$, then at least a fixed point belongs to $V(s)$ for
every invariant section $s$. If $s$ is a section of $s\in\H^0(X,O_X(-K_X))$, then $s$ is a polynomial
of multidegree $(2,2,2,2)$ in the variables
$((x_{10},x_{11})$, $(x_{20},x_{21})$, $(x_{30},x_{31})$, $(x_{40},x_{41}))$.
The condition $V(s) \cap \Fix(G)=\emptyset$ is equivalent to all of the coefficients of the $16$ monomials
$x^2_{1i}x^2_{2j}x^2_{3k}x^2_{4l}$ being nonzero.
\smallskip

\noindent If $\sigma=\id$, then $A_j^2=A$ for all $j=1,\dots,4$. Hence $A_j=\begin{smallbmatrix}
1 & 0 \\
0 & \pm i
\end{smallbmatrix}$. Then $x^2_{10}x^2_{20}x^2_{30}x^2_{40}$ and $x^2_{10}x^2_{20}x^2_{30}x^2_{41}$ are eigenvectors
for the natural lift of the action of $g$ on $H^0(X,O_X(-K_X))$  with different eigenvalue.
So they cannot both appear with nontrivial coefficient in an eigensection $s$, a contradiction.

%Assume that $s$ is an eigensection such that $V(s) \cap \Fix(G)= \emptyset$.
%Then for all $P\in \Fix(G)$, we have $$s_{|P}=\sum_{i,j,k,l}\lambda_{ijkl}(x^2_{1i}x^2_{2j}x^2_{3k}x^2_{4l})_{|P}\neq 0.$$
%This can't happen: in fact, a suitable choice of $P$ yields $\lambda_{0000}\neq 0 \neq \lambda_{0001}$.
%If both are different from $0$, then $s$ cannot be an eigensection, which contradicts our assumptions.
\smallskip

\noindent If $\sigma= (12)$, then $A_1A_2=A_2A_1=A_3^2=A_4^2=A$. Hence, we get $A_3,A_4\in \left\{\begin{smallbmatrix}
1 & 0\\
0 & \pm i
\end{smallbmatrix}\right\}$. If $k=(A_1,\id,\id,\id)\circ\id$, then
$$k^{-1}gk=(A_1^{-1}A_1\id,\id A_2A_1,A_3,A_4)\circ(12)=(\id,A,A_3,A_4)\circ(12),$$
so we may take $g$ of the form
$$g=\left(
\begin{smallbmatrix}
1 & 0 \\
0 & 1
\end{smallbmatrix},\begin{smallbmatrix}
1 & 0 \\
0 & -1
\end{smallbmatrix},\begin{smallbmatrix}
1 & 0 \\
0 & \pm i
\end{smallbmatrix},\begin{smallbmatrix}
1 & 0 \\
0 & \pm i
\end{smallbmatrix}\right)\circ (12).$$
As in the previous case, this leads to a contradiction, since $x^2_{10}x^2_{20}x^2_{30}x^2_{40}$ and $x^2_{10}x^2_{20}x^2_{30}x^2_{41}$ are eigenvectors for the natural lift of the action of $g$ on $H^0(X,O_X(-K_X))$ with different eigenvalue.
\medskip

\noindent We conclude that we must have $\sigma=(12)(34)$. Then $g=(A_1,A_2,A_3,A_4)\circ
(12)(34)$ with $g^2=(A)$.  Letting $k=(A_1,\id,A_3,\id)\circ\id$, we get
$k^{-1}gk=(\id,A,\id,A)\circ(12)(34)$.
\end{proof}

% --------------------------------------------------------------------
% --------------------------------------------------------------------
% --------------------------------------------------------------------

\subsection{Subgroups of order $8$}\ \newline
Up to isomorphism, there are $5$ groups of order $8$; these are $\Z_2^{\oplus 3}, D_8, \Z_8, \Z_2\oplus\Z_4$ and $Q_8$.

\begin{lem}\label{LEM:NOZ2^3}
No subgroup $G$ of $\Aut(X)$ with $\Dim\Fix(G)=0$ can be isomorphic to $\Z_2^{\oplus 3}$.
\end{lem}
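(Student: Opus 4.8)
The plan is to argue by contradiction, exploiting that an elementary abelian group of exponent $2$ is governed entirely by its order-$2$ elements, each of which is by now completely understood. Suppose $G\cong\Z_2^{\oplus 3}$ is a subgroup of $\Aut(X)$ with $\Dim\Fix(G)=0$. Every nontrivial element $g\in G$ has order $2$; since $\span{g}$ is a nontrivial subgroup of $G$ we have $\Fix\span{g}\subseteq\Fix(G)$, so $\Dim\Fix\span{g}=0$ and Lemma~\ref{LEM:ORDER2} applies. Thus $\pi(g)=\id$ and $g$ is conjugate to $(A,A,A,A)$. Consequently $G\leq\Ker(\pi)=\PGL(2)^{\times 4}$, and --- this is the point I would stress --- each of the four $\PGL(2)$-components of every nontrivial $g$ is a \emph{nontrivial} involution, since a component conjugate to $A$ cannot be the identity.

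First I would introduce, for each $i\in\{1,2,3,4\}$, the coordinate projection $p_i\colon G\to\PGL(2)$ sending $(A_1,A_2,A_3,A_4)$ to $A_i$. By the previous paragraph, no nontrivial element of $G$ lies in $\Ker(p_i)$, so each $p_i$ is injective. Hence $G$ embeds into $\PGL(2)$ as a subgroup isomorphic to $\Z_2^{\oplus 3}$, that is, as an elementary abelian $2$-group of order $8$.

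To close the argument I would invoke Klein's classification (Theorem~\ref{THM:KLEIN}): the finite subgroups of $\PGL(2)$ are cyclic, dihedral, or one of $A_4,S_4,A_5$, and a direct inspection shows that the only ones of exponent $2$ are the trivial group, $\Z_2$, and the Klein four-group $\span{A,B}\cong\Z_2^{\oplus 2}$. In particular $\PGL(2)$ contains no copy of $\Z_2^{\oplus 3}$, contradicting the embedding produced above. This contradiction shows that no such $G$ can exist.

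I expect the only genuinely delicate step to be the verification that every coordinate component of a nontrivial $g$ is nontrivial; everything else is formal once Lemma~\ref{LEM:ORDER2} is in hand. This nontriviality follows because $g$ conjugate to $(A,A,A,A)$ forces each $p_i(g)$ to be conjugate to $A$ --- equivalently, a trivial component would produce a pointwise-fixed $\P^1$, violating $\Dim\Fix(G)=0$. The rank bound on elementary abelian $2$-subgroups of $\PGL(2)$ is then the conceptual heart of the obstruction.
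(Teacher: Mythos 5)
Your proof is correct and is essentially the paper's own argument: both rest on Lemma~\ref{LEM:ORDER2} (forcing $\pi(G)=\{\id\}$, with every coordinate component of every nontrivial element a nontrivial involution, since a trivial component would give a fixed curve) and on Klein's Theorem~\ref{THM:KLEIN} (no subgroup of $\PGL(2)$ is isomorphic to $\Z_2^{\oplus 3}$). The only difference is the arrangement of the contradiction: the paper applies Klein first to produce a nontrivial element acting trivially on one factor and then contradicts $\Dim\Fix(G)=0$ via the resulting fixed curve, whereas you use the fixed-curve observation first to make each coordinate projection $p_i\colon G\to\PGL(2)$ injective and then contradict Klein --- the same ingredients read in the opposite order.
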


\begin{proof}
Assume by contradiction that such a group $G\simeq \Z_2^{\oplus 3}$ exists.
By Lemma \ref{LEM:ORDER2}, $\pi(G)=\{\id \}$. Then $G$ is generated by $3$ elements of
order $2$ of the form $g_j=(A_{ij})$ for $j=1,2,3$. By Theorem \ref{THM:KLEIN},
the subgroup of $\PGL(2)$ generated by $A_{11},A_{12},A_{13}$ cannot be isomorphic to $\Z_2^{\oplus 3}$.
Thus $\span{A_{11},A_{12},A_{13}}$ must be isomorphic to one of the proper subgroups of $\Z_2^{\oplus 3}$.
We deduce that there exists a nontrivial $g \in G$ acting trivially on the first factor. Since every automorphism of $\PP^1$
has fixed points, the action of $g$ on the other two factors has some fixed points, giving a $1$-dimensional locus of fixed points of $g$ on $X$, contradicting $\Dim\Fix(G)=0$.
\end{proof}

\begin{lem}
\label{LEM:NOD8}
Let $G$ be a subgroup of $\Aut(X)$ isomorphic to $D_8$.
Then, for every eigensection $s\in\H^0(X,O_X(-K_X))$, we have $V(s)\cap\Fix(G) \neq \emptyset$.
\end{lem}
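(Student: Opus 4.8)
The plan is to argue by contradiction, exploiting only the group-theoretic skeleton of $D_8$ together with the two structural lemmas already proved for involutions and for order-$4$ elements. Suppose that some eigensection $s\in\H^0(X,\O_X(-K_X))$ satisfies $V(s)\cap\Fix(G)=\emptyset$. I would first reduce to the case $\Dim\Fix(G)=0$: since $-K_X$ is ample, if $\Fix(G)$ contained a curve $C$ then $C\cdot V(s)>0$ and $V(s)$ would meet $\Fix(G)$, against the assumption (this is exactly the mechanism of Remark~\ref{REM:DIMFIX}). So from now on $\Dim\Fix(G)=0$, and the observation following Lemma~\ref{LEM:ORDER2} applies: \emph{every} involution of $G$ lies in $\Ker(\pi)$.

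Next I would extract the contradiction purely from the combinatorics of $D_8$. Write $G=\langle r,f\rangle$ with $r^4=f^2=1$ and $frf^{-1}=r^{-1}$. The element $rf$ is again an involution, since $(rf)^2=r(frf)=r\,r^{-1}=1$. Thus both $f$ and $rf$ are order-$2$ elements, hence both lie in $\Ker(\pi)$ by the previous paragraph; as $\Ker(\pi)$ is a subgroup, $r=(rf)\,f\in\Ker(\pi)$, i.e. $\pi(r)=\id$. On the other hand, $s$ is in particular an eigensection for $\langle r\rangle$, and $V(s)\cap\Fix\langle r\rangle\subseteq V(s)\cap\Fix(G)=\emptyset$; so Lemma~\ref{LEM:ORDER4} applies to the order-$4$ element $r$ and forces $r$ to be conjugate to $(\id,A,\id,A)\circ(12)(34)$. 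Since the image under $\pi$ is a conjugacy invariant, this gives $\pi(r)=(12)(34)\neq\id$, contradicting $\pi(r)=\id$. Therefore no eigensection can avoid $\Fix(G)$, which is the assertion.

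The only point that needs care -- and the step I expect to be the main, if modest, obstacle -- is making sure Lemma~\ref{LEM:ORDER4} is genuinely applicable: one must check that the global $G$-eigensection $s$ is still an eigensection for the cyclic subgroup $\langle r\rangle$ and that its zero locus is disjoint from $\Fix\langle r\rangle$, both of which follow at once from $\Fix\langle r\rangle\subseteq\Fix(G)$ and from $s$ spanning a $G$-invariant line. I would also record that the positive-dimensional case is not a separate difficulty but is absorbed into the initial ampleness reduction. A more computational route is available -- pin down $r$ as above, show that each reflection acts on every $\P^1$ factor by an anti-diagonal involution exchanging the two coordinate points, and read off two corner monomials $x_{1i}^2x_{2j}^2x_{3k}^2x_{4l}^2$ that are eigenvectors with different eigenvalues -- but the structural argument above is shorter and avoids the eigenvalue bookkeeping entirely.
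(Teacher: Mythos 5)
Your proposal is correct and follows essentially the same route as the paper: the paper likewise deduces from Remark~\ref{REM:DIMFIX} and Lemma~\ref{LEM:ORDER2} that every reflection has trivial image under $\pi$, from Lemma~\ref{LEM:ORDER4} that an order-$4$ rotation has nontrivial image, and then derives the contradiction from the fact that in $D_8$ every rotation is a product of two reflections (your $r=(rf)f$ is exactly this decomposition). Your extra care in verifying the hypotheses of Lemma~\ref{LEM:ORDER4} for the subgroup $\langle r\rangle$ is a sound, if slightly more explicit, rendering of what the paper leaves implicit.
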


\begin{proof}
Assume by contradiction $V(s)\cap\Fix(G) = \emptyset$. Then by Remark~\ref{REM:DIMFIX} and
Lemma~\ref{LEM:ORDER2}, for every reflection $s \in G$, $\pi(g)=\id$.
By Lemma~\ref{LEM:ORDER4}, for a rotation $r$ of order $4$, $\pi(r) \not = \id$.
But in $D_8$ every rotation is product of two reflections, $r=s_1s_2$. This is a contradiction since then
$\id \neq \pi(r)=\pi(s_1)\pi(s_2)=\id \cdot \id$.
\end{proof}

% --------------------------------------------------------------------

\begin{lem}\label{LEM:Z4xZ2}
Assume that $G\simeq \Z_4\oplus \Z_2$ is a subgroup of $\Aut(X)$ such that there exists an eigensection $s\in\H^0(X,O_X(-K_X))$ with
$V(s)\cap\Fix(G) = \emptyset$. Then, up to conjugation, $G=\span{(\id,A,\id,A)\circ(12)(34), (B,B,B,B)}$.
\end{lem}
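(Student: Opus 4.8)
The plan is to choose a generator $g$ of order $4$ and an element $h$ of order $2$ with $h\notin\span{g}$, so that $G=\span{g,h}$, and to normalize first $g$ and then $h$. By Remark~\ref{REM:DIMFIX} the hypothesis forces $\Dim\Fix(G)=0$, and since $\Fix\span{g'}\subseteq\Fix(G)$ for every $g'\in G$, every nontrivial element of $G$ has a $0$-dimensional fixed locus. As $s$ is a $G$-eigensection it is in particular a $g$-eigensection, and $V(s)\cap\Fix\span{g}\subseteq V(s)\cap\Fix(G)=\emptyset$; hence Lemma~\ref{LEM:ORDER4} applies to $g$. Conjugating the whole group I may therefore assume $g=(\id,A,\id,A)\circ(12)(34)$, and a direct computation with the composition formula gives $g^2=(A,A,A,A)$.

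Next I would pin down $h$. Since $h$ has order $2$ and $0$-dimensional fixed locus, the remark after Lemma~\ref{LEM:ORDER2} gives $\pi(h)=\id$, so $h=(B_1,B_2,B_3,B_4)$ with $B_i^2=\id$. Imposing $hg=gh$ (recall $G$ is abelian) and using the conjugation formula $hgh^{-1}=(B_iA_iB_{\sigma(i)}^{-1})\circ\sigma$ recalled above, one gets $B_iA_iB_{\sigma(i)}^{-1}=A_i$, which unwinds to $B_1=B_2$, $B_3=B_4$, and $B_1,B_3$ commuting with $A$. The centralizer of $A=\begin{smallbmatrix}1&0\\0&-1\end{smallbmatrix}$ in $\PGL(2)$ is the union of the diagonal and the anti-diagonal subgroups, and among its elements the ones of order dividing $2$ are $\id$, $A$, and the anti-diagonal involutions $\begin{smallbmatrix}0&1\\ \beta&0\end{smallbmatrix}$.

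I would then eliminate the diagonal options using the fixed loci of the two remaining involutions. If $B_i=\id$ for some $i$, then $\span{h}$ fixes two of the four factors $\P^1$ pointwise, so $\Dim\Fix\span{h}>0$, a contradiction. Computing $g^2h=(AB_1,AB_1,AB_3,AB_3)$, the same argument applied to $g^2h$ forces $AB_i\neq\id$, i.e. $B_i\neq A$. Hence $B_1$ and $B_3$ are anti-diagonal involutions, $B_i=\begin{smallbmatrix}0&1\\ \beta_i&0\end{smallbmatrix}$.

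Finally I would standardize $h$ without disturbing $g$, which is the delicate point. Having spent the global conjugation on the normal form of $g$, I may only conjugate by its centralizer: solving $C_i^{-1}A_iC_{\sigma(i)}=A_i$ shows that $k=(C_1,C_1,C_3,C_3)$ fixes $g$ exactly when $C_1,C_3$ commute with $A$. Choosing $C_1,C_3$ diagonal, the conjugate of $B_i$ is the anti-diagonal involution with parameter $\beta_i\gamma_i^2$, so picking $\gamma_i$ with $\gamma_i^2=\beta_i^{-1}$ sends both $B_1$ and $B_3$ to $B=\begin{smallbmatrix}0&1\\1&0\end{smallbmatrix}$ at once. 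The main obstacle is precisely this compatibility---that the centralizer of the normalized $g$ still acts transitively enough on the admissible $h$ to reach $(B,B,B,B)$---and the free diagonal parameter $\gamma_i$ is what makes it work. This yields $k^{-1}Gk=\span{(\id,A,\id,A)\circ(12)(34),(B,B,B,B)}$, as claimed.
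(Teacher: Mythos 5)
Your proof is correct, but it runs the normalization in the opposite order from the paper's and never uses Lemma~\ref{LEM:Z2xZ2}. The paper applies Lemma~\ref{LEM:Z2xZ2} to the Klein subgroup $\span{g^2,h}\simeq\Z_2\oplus\Z_2$, which simultaneously puts $g^2=(A,A,A,A)$ and $h=(B,B,B,B)$; Lemma~\ref{LEM:ORDER4} then serves only to identify $\pi(g)$ with $(12)(34)$ up to conjugacy, and the concluding conjugation $k=(A_1,\id,A_3,\id)$ brings $g$ to normal form while fixing $h$, because the relations coming from $g^2=(A,A,A,A)$ and $gh=hg$ force the $A_i$ to commute with $B$. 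You instead spend the global conjugation on $g$ via Lemma~\ref{LEM:ORDER4}, and are then forced to work inside the centralizer of the normalized $g$: you classify the involutions commuting with $A$ (diagonal versus anti-diagonal), exclude $\id$ and $A$ by the fixed-locus dimension argument applied to $h$ and to $g^2h$ (correctly relying on $h\notin\span{g}$ so that $g^2h\neq\id$), and finally move $h$ to $(B,B,B,B)$ by a diagonal conjugation that fixes $g$. This is exactly the analysis the paper sidesteps here by citing Lemma~\ref{LEM:Z2xZ2}, and which it carries out elsewhere (proof of Theorem~\ref{THM:4GPS}, cases (1) and (4)). What each route buys: the paper's proof is three lines because the simultaneous normalization is prepackaged in Lemma~\ref{LEM:Z2xZ2}; yours is independent of that lemma and makes explicit the genuinely delicate point, namely that the centralizer of the normalized $g$ is still large enough to standardize $h$. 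One cosmetic remark: with the convention $h\mapsto k^{-1}hk$ and $C_i$ the diagonal matrix with entries $1,\gamma_i$, the parameter of the conjugated anti-diagonal involution is $\beta_i\gamma_i^{-2}$ rather than $\beta_i\gamma_i^{2}$; this is harmless, since $\C^*$ contains the required square root either way.
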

\begin{proof}
Assume that $G\simeq \Z_4\oplus \Z_2$ with generators $g$ and $h$.
By Lemma \ref{LEM:Z2xZ2}, we may assume $g^2=(A,A,A,A)$ and $h=(B,B,B,B)$.
By Lemma \ref{LEM:ORDER4}, $\pi(g)$ is conjugated to $(12)(34)$ so that we may assume $g=(A_i)\circ (12)(34)$.
Since
\[
A_1A_2=A_2A_1=A_3A_4=A_4A_3=A,
\]
and $A_iB=BA_i$, for all $i=1,\dots,4$ we deduce that $G$ is conjugated to a subgroup of $\Aut(X)$ given by
$\span{(\id,A,\id,A)\circ(12)(34), (B,B,B,B)}$, via $k:=(A_1,\id,A_3,\id)$.
\end{proof}

% --------------------------------------------------------------------

\begin{lem}\label{LEM:Z8}
Assume that $G\simeq \Z_8$ is a subgroup of $\Aut(X)$ such that there exists an eigensection $s\in\H^0(X,O_X(-K_X))$ with
$V(s)\cap\Fix(G) = \emptyset$. Then, up to conjugation, $G=\span{(\id,\id,\id,A)\circ(1324)}$.
\end{lem}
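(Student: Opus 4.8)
The plan is to determine the structure of a cyclic group of order $8$ inside $\Aut(X)$ whose fixed locus is disjoint from an eigensection, by analyzing the image of a generator under $\pi\colon \Aut(X)\to S_4$. Let $g$ be a generator of $G\simeq\Z_8$. The key constraint is that $g^4$ has order $2$ and, by Remark~\ref{REM:DIMFIX} applied to $G$, satisfies $\Dim\Fix(g^4)=0$; hence by Lemma~\ref{LEM:ORDER2} we may conjugate so that $g^4=(A,A,A,A)$ and in particular $\pi(g^4)=\id$. Meanwhile $g^2$ has order $4$ and, by Lemma~\ref{LEM:ORDER4} (its fixed locus is also $0$-dimensional and misses an eigensection since $\Fix\langle g\rangle\supseteq\Fix\langle g^2\rangle$), we have $\pi(g^2)$ conjugate to $(12)(34)$. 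So $\pi(g)$ is an element of $S_4$ whose square is a product of two disjoint transpositions; the only such elements are the $4$-cycles. Up to conjugacy I would normalize $\pi(g)=(1324)$, chosen so that $\pi(g)^2=(12)(34)$ matches the form coming from Lemma~\ref{LEM:ORDER4}.

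Having fixed $\pi(g)=(1324)$, write $g=(A_1,A_2,A_3,A_4)\circ(1324)$. Using the multiplication rule $h\circ g=(B_iA_{\tau(i)})\circ(\tau\sigma)$ from the start of Section~2, I would compute $g^2$, $g^4$ explicitly and impose $g^4=(A,A,A,A)$, which yields a system of relations among the $A_i$ in $\PGL(2)$ (products of the $A_i$ taken around the cycle must equal $A$, and the matrices must square appropriately). The next step is the standard conjugation-normalization: choosing $k=(C_1,C_2,C_3,C_4)\circ\id$ and using $k^{-1}gk=(C_i^{-1}A_iC_{\sigma(i)})\circ\sigma$, I would solve for the $C_i$ so as to absorb three of the four factors into the identity, reducing $g$ to the form $(\id,\id,\id,A_4)\circ(1324)$ for some single nontrivial $A_4$. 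The relation $g^4=(A,A,A,A)$ then pins down $A_4$ up to the allowed freedom; comparing with the target shows $A_4$ must be conjugate to $A$, and a final conjugation arranges $A_4=A$, giving $g=(\id,\id,\id,A)\circ(1324)$.

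There is one genuine verification I would not skip: I must confirm that this candidate \emph{actually admits} an eigensection $s$ with $V(s)\cap\Fix(G)=\emptyset$, i.e.\ that no fixed point is forced onto every invariant anticanonical section (otherwise the hypothesis of the lemma would be vacuous or the normalization inconsistent). As in Lemma~\ref{LEM:ORDER4}, $\Fix(G)=\Fix(g^4)$ consists of the $16$ points with each coordinate in $\{(1:0),(0:1)\}$, and the condition $V(s)\cap\Fix(G)=\emptyset$ is that all $16$ pure-square monomials appear with nonzero coefficient in $s$. The decisive point is to check that these $16$ monomials are distributed among distinct eigenspaces of the lifted $g$-action in a compatible way, so that they can simultaneously occur in a single eigensection---unlike the $\sigma\in\{\id,(12)\}$ cases in Lemma~\ref{LEM:ORDER4}, where two such monomials were forced into different eigenspaces.

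\textbf{The main obstacle} I expect is precisely controlling the eigenvalue bookkeeping in this last step: with $\pi(g)$ a $4$-cycle, the lift of $g$ permutes the four tensor factors cyclically while also scaling, so computing the eigenvalue attached to each of the $16$ monomials requires tracking how the cyclic permutation interacts with the $\pm1$-scaling built into $A$. Getting the relations among the $A_i$ right after conjugation---and verifying that the surviving configuration genuinely evades the obstruction that killed the $\id$ and $(12)$ cases---is where the real content lies; the group-theoretic reduction of $\pi(g)$ to a $4$-cycle is comparatively routine given the earlier lemmas.
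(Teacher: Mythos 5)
Your argument is correct, but it anchors the normalization differently from the paper, so it is worth comparing the two. The paper applies Lemma~\ref{LEM:ORDER4} to $g^2$ in full strength: after conjugation $g^2=(\id,A,\id,A)\circ(12)(34)$ exactly, forcing $\pi(g)\in\{(1324),(1423)\}$; it then replaces $g$ by $g^{-1}$ to get $\pi(g)=(1324)$, reads off $A_1=A_2=A_3^{-1}=AA_4^{-1}$ (with $A_1A=AA_1$), and conjugates by $(A_1,A_1,\id,\id)$. You instead pin $g^4=(A,A,A,A)$ via Lemma~\ref{LEM:ORDER2}, use Lemma~\ref{LEM:ORDER4} only for its permutation-theoretic output ($\pi(g^2)$ lies in the class of $(12)(34)$, hence $\pi(g)$ is a $4$-cycle), and then normalize $\pi(g)=(1324)$; one point you should make explicit is that this last conjugation must be by a \emph{pure permutation}, so as not to destroy $g^4=(A,A,A,A)$ --- which is legitimate because $S_4\subset\Aut(X)$ centralizes $(A,A,A,A)$. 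Your absorption step then closes cleanly: taking $C_1=\id$, $C_3=A_1^{-1}$, $C_2=A_3^{-1}A_1^{-1}$, $C_4=A_2^{-1}A_3^{-1}A_1^{-1}$, the surviving fourth entry of $k^{-1}gk$ is the product around the cycle, $A_1A_3A_2A_4$, which is exactly the first coordinate of $g^4$, i.e.\ equals $A$; so you land directly on $(\id,\id,\id,A)\circ(1324)$ without even needing the final Klein-type conjugation you allowed for. A small bonus of your route is that it bypasses the paper's substitution $g\mapsto g^{-1}$, which strictly speaking changes $g^2$ into $(A,\id,A,\id)\circ(12)(34)$ and requires an extra conjugation by the permutation $(12)(34)$ to restore the normal form --- a detail the paper glosses over. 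One correction: your closing ``genuine verification'' that $(\id,\id,\id,A)\circ(1324)$ really admits an eigensection avoiding its fixed points is not part of this lemma. The statement is an implication, so it would hold vacuously if no such $G$ existed, and a chain of conjugations cannot become ``inconsistent''; existence is a separate matter, settled in Section~\ref{SECT:GOODGORDER16} (the $\Z_8\oplus\Z_2$ example of \cite{BF} restricts to its $\Z_8$ subgroup).
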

\begin{proof}
\noindent Choose a generator $g=(A_i)\circ \sigma$ of  $G\simeq \Z_8$. By Lemma \ref{LEM:ORDER4},
we may assume $g^2=(\id,A, \id,A)\circ(12)(34)$, so $\sigma$ is equal to $(1324)$ or its inverse. Substituting
$g$ with $g^{-1}$ we may assume $\sigma=(1324)$. Then $A_1=A_2=A_3^{-1}=AA_4^{-1}$, with $A_1A=AA_1$. We can then reduce to
$g=(\id,\id,\id,A)\circ(1324)$ by conjugation with $k=(A_1,A_1,\id,\id)$.
\end{proof}

% --------------------------------------------------------------------
\begin{lem}\label{LEM:Q8}
Assume that $G\simeq Q_8$ is a subgroup of $\Aut(X)$ such that there exists an eigensection $s\in\H^0(X,O_X(-K_X))$ with
$V(s)\cap\Fix(G) = \emptyset$. Then, up to conjugation,
$G=\span{(\id,A,\id,A)\circ (12)(34),(\id,A,A,\id)\circ(13)(24)}$.
\end{lem}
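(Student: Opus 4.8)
The plan is to follow the template of the previous lemmas, leaning on the rigid internal structure of $Q_8$: it has a single element of order $2$, its center $z$; every other non-trivial element has order $4$; and $Q_8/\span{z}\cong\Z_2\oplus\Z_2$. Concretely I would fix generators $g,h$ obeying the standard relations $g^4=\id$, $g^2=h^2=z$ and $hgh^{-1}=g^{-1}$. The common eigensection $s$ is a semi-invariant for every element of $G$ and satisfies $V(s)\cap\Fix\span{f}\subseteq V(s)\cap\Fix(G)=\emptyset$ for each $f\in G$; hence Lemma~\ref{LEM:ORDER2} applies to $z$ and Lemma~\ref{LEM:ORDER4} applies to each of the six order-$4$ elements. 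In particular, after conjugation I may take $g=(\id,A,\id,A)\circ(12)(34)$, and then a direct computation gives $z=g^2=(A,A,A,A)$, consistent with Lemma~\ref{LEM:ORDER2}.

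Next I would pin down $\pi(G)$. By Lemma~\ref{LEM:ORDER2}, $z\in\Ker(\pi|_G)$, while Lemma~\ref{LEM:ORDER4} forces $\pi$ to be non-trivial on every order-$4$ element; since the only normal subgroup of $Q_8$ containing $z$ but none of the order-$4$ elements is $\span{z}$ itself, we get $\Ker(\pi|_G)=\span{z}$ and $\pi(G)\cong\Z_2\oplus\Z_2$. As the images of the order-$4$ elements are double transpositions, $\pi(G)$ must be the Klein four-subgroup $\{\id,(12)(34),(13)(24),(14)(23)\}$ of $S_4$, with $\pi(g)=(12)(34)$. Consequently $\pi(h)\in\{(13)(24),(14)(23)\}$; replacing $h$ by $gh$ if necessary (still an order-$4$ generator), I may assume $\pi(h)=(13)(24)$.

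I would then turn the defining relations into matrix identities via the composition and conjugation formulas recorded at the start of the section. Writing $h=(B_1,B_2,B_3,B_4)\circ(13)(24)$, the relation $h^2=(A,A,A,A)$ gives $B_1B_3=B_3B_1=A$ and $B_2B_4=B_4B_2=A$, while $hgh^{-1}=g^{-1}=(A,\id,A,\id)\circ(12)(34)$ forces $B_2$ and $B_4$ to lie in the centralizer of $A$ and expresses every $B_i$ through the single matrix $M:=B_2$, giving $h=(AM,M,M^{-1},M^{-1}A)\circ(13)(24)$.

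Finally I would eliminate the parameter $M$ by a conjugation preserving the normalized $g$. One checks that conjugation by $k=(C_1,C_1,C_3,C_3)$ with $C_1,C_3$ centralizing $A$ fixes $g$; taking $C_1=\id$ and $C_3=AM$ sends $h$ to $(\id,A,A,\id)\circ(13)(24)$, which is the second generator in the statement. The main obstacle I anticipate is the middle bookkeeping: correctly computing $\Ker(\pi|_G)$ to force $\pi(G)$ to be the Klein four-group (and justifying the reduction to $\pi(h)=(13)(24)$), and solving the non-commutative matrix equations while tracking precisely which residual conjugations still fix $g$. The rest is a routine invocation of Klein's theorem together with Lemmas~\ref{LEM:ORDER2} and~\ref{LEM:ORDER4}.
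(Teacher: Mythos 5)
Your proof is correct and follows essentially the same route as the paper: apply Lemma~\ref{LEM:ORDER4} to the order-$4$ elements to force $\pi(G)$ to be the Klein four-group, normalize the first generator to $(\id,A,\id,A)\circ(12)(34)$, solve the group relations for the components of the second generator (obtaining the same one-parameter family $(AM,M,M^{-1},M^{-1}A)\circ(13)(24)$ with $M$ centralizing $A$), and remove the parameter by a conjugation fixing the first generator. The only difference is cosmetic: you use the relation $hgh^{-1}=g^{-1}$ to cut the solution down to one parameter, whereas the paper uses $(ij)^2=(A,A,A,A)$ applied to the third generator $k=ij$; the two computations are interchangeable.
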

\begin{proof}

\noindent As usual for $G\simeq Q_8$, let $i$, $j$ and $k=ij$ be generators of order $4$.
By Lemma~\ref{LEM:ORDER4}, $\pi(i),\pi(j),\pi(k)\in \{(12)(34),(13)(24),(14)(23)\}$.
Since $\pi(i)\pi(j)=\pi(k)$, we deduce that $\pi(i),\pi(j)$ and $\pi(k)$ are all distinct.
We get \mbox{$\pi(Q_8)=\span{(12)(34),(13)(24)}\simeq \Z_2\oplus\Z_2$}. In particular, up to conjugation,
$i=(\id,A,\id,A)\circ (12)(34)$ and $j=(B_i)\circ (13)(24)$ with $j^2=(A,A,A,A)$. This forces $j$ to be of the form
$(B_1,B_2,B_1^{-1}A,B_2^{-1}A)\circ (13)(24)$ with $AB_i=B_iA$. Similarly, for
\mbox{$k:=ij=(B_2,AB_1,B_2^{-1}A,B_1^{-1})\circ (14)(23)$} we know that $k^2=(A,A,A,A)$, which implies $B_2=AB_1$.
Choosing $l=(B_1,B_1,\id,\id)$, we get
$l^{-1}il=i$ and $l^{-1}jl=(\id,A,A,\id)\circ(13)(24)$.
\end{proof}

% --------------------------------------------------------------------
% --------------------------------------------------------------------
% --------------------------------------------------------------------

\subsection{Subgroups of order $16$}\ \newline
Up to isomorphism, there are $14$ groups of order $16$.
These are:
\begin{small}
$$
\renewcommand{\arraystretch}{1.3}
\begin{array}{l}
\Z_8\oplus\Z_2,\quad \Z_4\oplus\Z_4,\quad \Z_{16},\quad \Z_4\oplus\Z_2\oplus\Z_2,\quad \Z_2\oplus\Z_2\oplus\Z_2\oplus\Z_2,\quad Q_{8} \oplus \Z_2,\quad D_{8}\oplus \Z_2,\\
\Z_4\ltimes \Z_4:=\span{g,h\,|\,g^4=h^4=\id,\, hgh^{-1}=g^{-1}},\quad Q_{16}:=\span{g,h,k\,|\,g^4=h^2=k^2=ghk}, \\
CP(D_8,Z_4):=\span{g,h,k\,|\,g^4=h^2=[g,h]=[h,k]=\id,\,g^2=k^2,\,ghg=h}, \\
SG(16,3):=\span{g,h,k\,|\,g^4=h^2=k^2=[g,h]=[h,k]=\id,\,kgk^{-1}=gh}, \\
D_{16}:=\span{g,h\,|\,g^8=h^2=\id,\, h^{-1}gh=g^{-1}}, \\
SD_{16}:=\span{g,h\,|\,g^8=h^2=\id,\, h^{-1}gh=g^3},\\
M_{16}:=\span{g,h\, |\,g^8=h^2=\id,\, h^{-1}gh=g^5}.
\end{array}
$$
\end{small}

\begin{prop}
Let $G$ be a subgroup of $\Aut(X)$ such that there is an eigensection $s\in\H^0(X,O_X(-K_X))$,
with $V(s)\cap\Fix(G) = \emptyset$.
Then $G$ cannot be isomorphic to any of the following groups:
$\Z_2^{\oplus 4}$, $\Z_4\oplus\Z_2^{\oplus 2}$, $SG(16,3)$, $CP(D_8,\Z_4)$,
$D_8\oplus \Z_2$, $D_{16}$, $SD_{16}$.
\end{prop}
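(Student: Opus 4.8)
The plan is to rule out each of the seven groups by a ``subgroup obstruction'' strategy: the key observation is that every result proved earlier is monotone under passing to subgroups. Concretely, if $G\leq\Aut(X)$ admits an eigensection $s$ with $V(s)\cap\Fix(G)=\emptyset$, then for any subgroup $H\leq G$ the same $s$ is an eigensection for every element of $H$, and $\Fix(H)\subseteq\Fix(G)$ gives $V(s)\cap\Fix(H)=\emptyset$. In particular $\Dim\Fix(H)=0$ by Remark~\ref{REM:DIMFIX}, and the structural conclusions of Lemmas~\ref{LEM:ORDER2}, \ref{LEM:ORDER4} and \ref{LEM:NOZ2^3} apply to $H$. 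So the entire argument reduces to purely group-theoretic statements about which of the fourteen groups contain a forbidden subgroup or violate a constraint on the homomorphism $\pi$.

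The first and easiest cases are $\Z_2^{\oplus 4}$ and $\Z_4\oplus\Z_2^{\oplus 2}$: each contains a copy of $\Z_2^{\oplus 3}$, which is excluded by Lemma~\ref{LEM:NOZ2^3}, so these two groups cannot occur. The second family exploits $\pi$. By Lemma~\ref{LEM:ORDER2} every order-$2$ element of $G$ lies in $\Ker\pi$, and by Lemma~\ref{LEM:ORDER4} every order-$4$ element $g$ has $\pi(g)$ conjugate to $(12)(34)$, hence $\pi(g)\neq\id$ and $\pi(g)$ has order $2$. The composite restriction $\pi|_G\colon G\to S_4$ therefore kills all involutions and sends elements of order $4$ to fixed-point-free involutions in $S_4$. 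For $D_8\oplus\Z_2$ and $D_{16}$, I would invoke Lemma~\ref{LEM:NOD8} directly (or its underlying mechanism): these contain a dihedral subgroup $D_8$ in which a rotation of order $4$ is a product of two reflections, forcing $\pi(r)=\pi(s_1)\pi(s_2)=\id\cdot\id=\id$, contradicting $\pi(r)\neq\id$. For $SD_{16}=\langle g,h\mid g^8=h^2=\id,\ h^{-1}gh=g^3\rangle$, the same contradiction arises: $h$ and $g$ have even order and one checks that a suitable product of $h$ with a power of $g$ produces an order-$4$ element expressible as a product of involutions, again collapsing $\pi$.

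The genuinely delicate cases are $SG(16,3)$ and $CP(D_8,\Z_4)$, where one must combine the constraints on $\pi$ more carefully rather than just citing $D_8$. For each, I would compute the homomorphism $\pi|_G$ whose image lies in $\langle(12)(34),(13)(24)\rangle\simeq\Z_2\oplus\Z_2$ (the only fixed-point-free involutions together with the identity), determine its kernel from the group relations, and then show the kernel contains a nontrivial element that ought to map to a nonidentity fixed-point-free involution under $\pi$ but cannot, or alternatively that some order-$4$ generator is forced into $\Ker\pi$ by being a product of elements whose $\pi$-images are trivial. The main obstacle is precisely this bookkeeping: in $SG(16,3)$ and $CP(D_8,\Z_4)$ the interaction between the central $\Z_2$, the order-$4$ elements, and the relation $g^2=k^2$ (resp.\ $kgk^{-1}=gh$) must be tracked so that one exhibits an order-$4$ element lying in $\Ker\pi$, contradicting Lemma~\ref{LEM:ORDER4}. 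I expect this to be the part requiring the most care, since it is not a single-subgroup citation but a constraint on the whole homomorphism $\pi|_G$ together with the group presentation.
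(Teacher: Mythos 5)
Your overall strategy coincides with the paper's: the hypothesis is monotone under passing to subgroups (an eigensection for $G$ is an eigensection for any $H\leq G$, and $\Fix(H)\subseteq\Fix(G)$), so it suffices to exhibit inside each of the seven groups a subgroup already excluded by the earlier lemmas. Five of your seven cases are complete and correct: $\Z_2^{\oplus 4}$ and $\Z_4\oplus\Z_2^{\oplus 2}$ contain $\Z_2^{\oplus 3}$, excluded by Lemma~\ref{LEM:NOZ2^3}; $D_8\oplus\Z_2$ and $D_{16}$ contain $D_8$, excluded by Lemma~\ref{LEM:NOD8}; and your argument for $SD_{16}$ is right, being exactly the $D_8$ mechanism applied to the subgroup $\span{g^2,h}\cong D_8$ (indeed $h^{-1}g^2h=g^6=g^{-2}$, and $g^2=h\cdot hg^2$ is a product of two involutions).

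The genuine gap is your last paragraph. For $SG(16,3)$ and $CP(D_8,\Z_4)$ you prove nothing: you outline a computation of $\pi|_G$ that you ``would'' do and ``expect'' to require care, so the proposition is not established for these two groups as written. Moreover, the premise of that plan --- that these cases cannot be settled by a single-subgroup citation --- is false; they succumb to the same observation as the others, which is precisely how the paper handles all seven groups at once. In $SG(16,3)=\span{g,h,k}$, the element $h$ is central by the relations, and $g^2$ is also central because $kg^2k^{-1}=(kgk^{-1})^2=(gh)^2=g^2h^2=g^2$; hence $g^2$, $h$, $k$ are three commuting involutions generating a subgroup of order $8$ (the sixteen normal-form elements $g^ah^bk^c$ are pairwise distinct), so $\Z_2^{\oplus 3}\leq SG(16,3)$ and Lemma~\ref{LEM:NOZ2^3} applies. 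For $CP(D_8,\Z_4)$ no bookkeeping with $\pi$ is needed at all: a central product of $D_8$ with $\Z_4$ contains $D_8$ as a subgroup by construction (the amalgamated central subgroup of order $2$ meets the $D_8$ factor only inside its center, so $D_8$ embeds in the quotient), and Lemma~\ref{LEM:NOD8} applies. With these two observations your proof closes and becomes the paper's one-line argument: every group on the list contains a copy of $\Z_2^{\oplus 3}$ or of $D_8$.
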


\begin{proof}
This follows directly by Lemmas \ref{LEM:NOZ2^3} and \ref{LEM:NOD8}
since all the groups in the statement have a subgroup isomorphic either to $\Z_2^{\oplus 3}$ or to $D_8$.
\end{proof}

\begin{prop}
Let $G$ be a subgroup of $\Aut(X)$ such that there is an eigensection $s\in\H^0(X,O_X(-K_X))$,
with $V(s)\cap\Fix(G) = \emptyset$.
Then $G$ is not isomorphic to \mbox{$\Z_{16}$ or $Q_{16}$}.
\end{prop}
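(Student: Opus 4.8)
The plan is to run both cases through the homomorphism $\pi\colon\Aut(X)\to S_4$, using the dictionary between the order of an element of $\Aut(X)$ and the cycle type of its image established in the previous lemmas. Precisely, under the standing hypothesis that there is an eigensection $s$ with $V(s)\cap\Fix(G)=\emptyset$, Lemma~\ref{LEM:ORDER2} says an element of order $2$ lies in $\Ker(\pi)$, Lemma~\ref{LEM:ORDER4} says an element of order $4$ maps to a double transposition, and Lemma~\ref{LEM:Z8} says a generator of a cyclic subgroup of order $8$ maps to a $4$-cycle. Each of these applies to the relevant element or subgroup of $G$, since $s$ is automatically an eigensection for every element of $G$ and $\Fix$ of any subgroup is contained in $\Fix(G)$. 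The external input is the elementary fact that every element of $S_4$ has order at most $4$.

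For $G\cong\Z_{16}$ the contradiction is immediate. Let $g$ generate $G$. Since $\pi(g)\in S_4$ has order at most $4$, we get $\pi(g^4)=\pi(g)^4=\id$. But $g^4$ has order $4$, so by Lemma~\ref{LEM:ORDER4} its image $\pi(g^4)$ must be a nontrivial double transposition, a contradiction.

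For $G\cong Q_{16}$ I would use the standard description $G=\span{a,b}$ with $a$ of order $8$, $b^2=a^4$ and $bab^{-1}=a^{-1}$. Applying Lemma~\ref{LEM:Z8} to $\span{a}\cong\Z_8$ and conjugating $G$, I may assume $a=(\id,\id,\id,A)\circ(1324)$, so $\pi(a)=(1324)$. The element $b$ has order $4$, so Lemma~\ref{LEM:ORDER4} forces $\pi(b)$ to be a double transposition; and $bab^{-1}=a^{-1}$ gives $\pi(b)\,(1324)\,\pi(b)^{-1}=(1324)^{-1}$. The only double transpositions conjugating $(1324)$ to its inverse are $(13)(24)$ and $(14)(23)$, since the third one, $(12)(34)=(1324)^2$, commutes with $(1324)$. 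In either case $\pi(ab)=\pi(a)\pi(b)$ is a single transposition. However $(ab)^2=a(bab^{-1})b^2=a^4\neq\id$ while $(ab)^4=a^8=\id$, so $ab$ has order $4$; Lemma~\ref{LEM:ORDER4} then demands that $\pi(ab)$ be a double transposition, which is the desired contradiction.

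The main obstacle is the $Q_{16}$ case: unlike for $\Z_{16}$, the order-versus-image dictionary is not by itself inconsistent, because $\pi$ can send the generators of $Q_{16}$ to a compatible pair consisting of a $4$-cycle and a double transposition. The contradiction only surfaces once one also tests the product $ab$, which is again of order $4$; the two steps needing care are the normalization of $a$ through Lemma~\ref{LEM:Z8} (choosing the generator of $\span{a}$ so as to preserve the defining relations) and the short $S_4$ computation showing $\pi(ab)$ is a transposition rather than a double transposition.
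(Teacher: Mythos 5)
Your proof is correct and takes essentially the same route as the paper's: both cases are pushed into $S_4$ via $\pi$, with Lemma~\ref{LEM:Z8} placing the order-$8$ generator on a $4$-cycle and Lemma~\ref{LEM:ORDER4} forcing every order-$4$ element onto a double transposition, and your final contradiction for $Q_{16}$ (that $\pi(ab)$ is a transposition, hence not a double transposition) is the same incompatibility the paper extracts directly from $\pi(g)\pi(h)=\pi(gh)$, just reached through an extra, harmless detour via the relation $bab^{-1}=a^{-1}$. One small repair in the $\Z_{16}$ case: ``order at most $4$'' does not by itself give $\pi(g)^4=\id$ (a $3$-cycle has order at most $4$ but nontrivial fourth power); you need that the order of $\pi(g)$ divides $o(g)=16$ and hence is $1$, $2$ or $4$ --- equivalently, the paper's phrasing that $(12)(34)$ has no fourth root in $S_4$.
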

\begin{proof}
\noindent If $G\simeq \Z_{16}$, choose a generator $g$ and set $\sigma=\pi(g)$. By Lemma
\ref{LEM:ORDER4}, up to conjugation, $\sigma^4=\pi(g^4)=(12)(34)$. But $(12)(34)$ has no fourth
root in $S_4$, a contradiction.
If $G\simeq Q_{16}$ then, as in the list above, $g^4=h^2=ghgh$, with $g$ of order $8$ and $h,gh$ of order $4$. By the lemmas
\ref{LEM:ORDER4} and \ref{LEM:Z8}, we may assume $\pi(g)=(1234)$ whereas $\pi(h),\pi(gh)\in\{(12)(34),(13)(24),(14)(23)\}$.
This contradicts $\pi(g)\pi(h)=\pi(gh)$.
\end{proof}

\begin{prop}
Let $G$ be a subgroup of $\Aut(X)$ such that there is an eigensection $s\in\H^0(X,O_X(-K_X))$,
with $V(s)\cap\Fix(G) = \emptyset$. Then $G$ is not isomorphic to $M_{16}$.
\end{prop}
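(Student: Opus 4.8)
The plan is to put the pair $(g,h)$ into a normal form and then contradict the existence of the eigensection $s$.

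First I normalize the generators. Since the order-$8$ element $g$ generates a cyclic subgroup for which $s$ is an eigensection with $V(s)\cap\Fix\span{g}=\emptyset$, Lemma~\ref{LEM:Z8} lets me assume, up to conjugacy, that $g=(\id,\id,\id,A)\circ(1324)$. The complementary generator $h$ has order $2$, so by the observation following Lemma~\ref{LEM:ORDER2} one has $\pi(h)=\id$; write $h=(B_1,B_2,B_3,B_4)$ with each $B_i$ of order $2$ in $\PGL(2)$. Now I impose the defining relation $h^{-1}gh=g^5$. A direct computation gives $g^5=(A,A,A,\id)\circ(1324)$, and the conjugation formula recorded at the start of the section turns the relation into the four equations $B_1B_3^{-1}=B_2B_4^{-1}=B_3B_2^{-1}=A$ and $B_4AB_1^{-1}=\id$ in $\PGL(2)$. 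Solving these yields $B_2=B_1$, $B_3=B_4=AB_1$ and $AB_1A=B_1$, i.e. $B_1$ centralizes $A$. As $B_1$ has order $2$ and $B_1=A$ would force $B_3=\id$, giving a positive-dimensional fixed locus excluded by Remark~\ref{REM:DIMFIX}, $B_1$ must be one of the anti-diagonal involutions commuting with $A$; conjugating by a diagonal element that commutes with $g$ I may take $B_1=B$, so that $h=(B,B,AB,AB)$.

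It is worth noting that, unlike the cases $\Z_{16}$ and $Q_{16}$, here the permutation homomorphism $\pi$ produces no contradiction: $\pi(g)$ is a $4$-cycle, $\pi(h)=\id$, and $\pi(g^5)=\pi(g)$, all compatible. Hence the hypothesis on $s$ must really be used. Likewise, the corner-monomial argument of Lemma~\ref{LEM:ORDER4} cannot be applied verbatim, because $g$ cyclically permutes the monomials $x_{1i}^2x_{2j}^2x_{3k}^2x_{4l}^2$ without splitting them into distinct eigenspaces, so the obstruction cannot come from two corner monomials carrying different eigenvalues.

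Finally I rule out $s$ through the fixed loci of the involutions in $G$. The hypothesis $V(s)\cap\Fix(G)=\emptyset$ contains two kinds of conditions. Writing $s=\sum c_{ijkl}\,x_{1i}^2x_{2j}^2x_{3k}^2x_{4l}^2+\cdots$, avoiding the sixteen fixed points of $g^4=(A,A,A,A)$ forces every corner coefficient $c_{ijkl}$ to be nonzero; combined with $g\cdot s=\chi s$ and $h\cdot s=\eta s$, this pins down the pair of eigenvalues $(\chi,\eta)$ of $s$. The remaining points of $\Fix(G)$ come from the two non-central involutions $h$ and $g^4h=(AB,AB,B,B)$, whose fixed loci are respectively the sixteen points with first two coordinates in $\{(1:1),(1:-1)\}$ and last two in $\{(1:i),(1:-i)\}$, and the sixteen points with these roles exchanged. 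At such a point $p$ the stabilizer acts on the fibre of $\O_X(-K_X)$ by a character $\mu_p$, and $G$-equivariance of evaluation forces $\mu_p$ to agree with the eigenvalue of $s$ under the same involution; if they disagree then $s(p)=0$, placing $p$ in $V(s)\cap\Fix(G)$, a contradiction. The crux — and the step I expect to be the main obstacle — is exactly this local bookkeeping: one must compute the characters by which $A$ and $AB$ act on the anticanonical fibre at their respective fixed points, combine them over the four factors, and compare with the eigenvalues of $s$ forced by the corner conditions, showing that compatibility fails at some fixed point of $h$ or of $g^4h$.
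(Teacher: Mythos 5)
Your normal form is correct, but it is not a proof, and the route you map out for finishing it is a dead end. First, note that your argument is genuinely different from the paper's: the paper conjugates the order-two generator to $h=(A,A,A,A)$, keeps $g=(A_1,A_2,A_3,A_4)\circ(1324)$ arbitrary, and claims that the relation $h^{-1}gh=g^5$ together with $o(g)=8$ is already self-contradictory (a purely group-theoretic argument ending in $g^4=\id$, with the eigensection entering only through Lemmas~\ref{LEM:ORDER2} and~\ref{LEM:Z8}). You instead normalize $g=(\id,\id,\id,A)\circ(1324)$ by Lemma~\ref{LEM:Z8} and solve the conjugation equations for $h$; your equations and their solution $h=(B,B,AB,AB)$ are correct, the key point being that $B$ and $AB$ centralize $A$ in $\PGL(2)$ (where $-A=A$). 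But there the proof stops: the whole contradiction is delegated to a ``local bookkeeping'' step that you describe qualitatively, flag as the main obstacle, and never carry out. As written, you have a normal form plus a conjecture about where the contradiction lives; that is a genuine gap.

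Worse, the gap cannot be filled as you propose, because the bookkeeping comes out consistent rather than contradictory. Carry it out: non-vanishing on $\Fix(g^4)$ forces all sixteen corner coefficients of $s$ to be nonzero; the lift of $g$ fixes $\prod_i x_{i0}^2$, so its eigenvalue on $s$ is $1$ and corner coefficients are constant on $g$-orbits, in particular the coefficients of $x_{11}^2x_{21}^2x_{30}^2x_{40}^2$ and $x_{10}^2x_{20}^2x_{31}^2x_{41}^2$ coincide; since the lift of $h$ swaps these two monomials, its eigenvalue is also $1$. At a fixed point of $h=(B,B,AB,AB)$ the coordinates are scaled by eigenvalues $\pm1,\pm1,\pm i,\pm i$ of $B,B,AB,AB$, so the multidegree-$(2,2,2,2)$ form $s$ is scaled by $(\pm1)^2(\pm1)^2(\pm i)^2(\pm i)^2=1$, which agrees with the forced eigenvalue $1$; the same holds on $\Fix(g^4h)$ and at the corners. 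So equivariance forces no vanishing at any of the $48$ points. In fact, writing $u_i=x_{i0}^2$, $v_i=x_{i1}^2$, $w_i=x_{i0}x_{i1}$, the section
\[
\tau=\bigl(w_1u_2w_3u_4+u_1w_2w_3u_4+u_1w_2u_3w_4-w_1u_2u_3w_4\bigr)-\bigl(w_1v_2w_3v_4+v_1w_2w_3v_4+v_1w_2v_3w_4-w_1v_2v_3w_4\bigr)
\]
is invariant under the lifts of both $g$ and $h$ and equals $\pm 4i$ at every point of $\Fix(h)\cup\Fix(g^4h)$, so adding a general invariant combination of the corner monomials produces an eigensection with $V(s)\cap\Fix(G)=\emptyset$. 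Since your pair really does satisfy the $M_{16}$ relations (one checks directly that $h^{-1}gh=(A,A,A,\id)\circ(1324)=g^5$, with $g$ of order $8$), your normalization actually produces a subgroup isomorphic to $M_{16}$ satisfying the hypothesis of the Proposition --- i.e.\ a counterexample to the statement, not a proof of it. This also pinpoints a problem in the paper's own proof: its formula $g^5=(D,D,D,DA_4)\circ(1324)$ with $D=A_1A_2A_3A_4$ is incorrect, since by the paper's composition rule the entries of $g^4$ are the four cyclic rotations of the word $A_1A_3A_2A_4$, which need not be equal, and with the correct formula the conclusion $g^4=\id$ no longer follows. So your instinct that group theory and corner monomials alone cannot settle this case was right; but the honest conclusion of your computation is that the Proposition and its published proof need to be re-examined, not that a finer character count will finish the argument.
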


\begin{proof}
If $G$ is isomorphic to $M_{16}$, it has two generators $g$ and $h$ of order $8$ and $2$, respectively,
such that $h^{-1}gh=g^5$. By Lemma~\ref{LEM:ORDER2} and Lemma~\ref{LEM:Z8}, up to conjugation,
we can assume that $h=(A,A,A,A)$, and $g=(A_1,A_2,A_3,A_4)\circ \sigma$ where $\sigma=(1324)$.
Denote by $D$ the element $A_1A_2A_3A_4\in \PGL(2)$. Since $o(g)=8$, $D$ is an involution.
By direct computation, $h^{-1}gh=hgh=(AA_iA)\circ (1324)$ and $g^5=(D,D,D,DA_4)\circ (1324)$.
Since $h^{-1}gh=g^5$,
$$
AA_1A  = AA_2A  = AA_3A =  D\quad \text{and} \quad AA_4A  = DA_4.
$$
Therefore $A_1=A_2=A_3=ADA$ and then $D=A_1A_2A_3A_4=A_1^3A_4=ADAA_4$, so that $A_4=(AD)^2$.
Then, substituting in $AA_4A=DA_4$, we get $AADADA=DADAD$. Since $A^2=\id$ we get
$D=\id$. But then $g^4=(D,D,D,D)=\id$, contradicting $o(g)=8$.
\end{proof}

There are $4$ cases left, $\Z_8\oplus\Z_2$, $\Z_4\oplus\Z_4$,
$Q_8\oplus\Z_2$ and $\Z_4\ltimes \Z_4$. We will now show that they all occur and that in each
case the group action is determined up to conjugacy.

\begin{thm}\label{THM:4GPS}
Let $G$ be a subgroup of $\Aut(X)$ such that there is an eigensection $s\in\H^0(X,O_X(-K_X))$,
with $V(s)\cap\Fix(G) = \emptyset$. Then, up to conjugation,
\begin{enumerate}\renewcommand{\labelenumi}{(\arabic{enumi})}
\item $G=\span{(\id,\id,\id,A)\circ (1324), (B,B,B,B)} \simeq \ZZ_8 \oplus \ZZ_2$;
\item $G=\span{(\id,A,\id,A)\circ (12)(34), (\id,\id,B,B) \circ (13)(24)} \simeq \ZZ_4 \oplus \ZZ_4$;
\item $G=\span{(\id,A,\id,A)\circ (12)(34), (\id,A,B,AB) \circ (13)(24)}\simeq \Z_4\ltimes \Z_4$;
\item $G=\span{(\id,A,\id,A)\circ (12)(34), (\id,A,A,\id)\circ (13)(24), (B,B,B,B)}\simeq Q_8\oplus\Z_2$.
\end{enumerate}
\end{thm}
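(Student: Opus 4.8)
The plan is to assume $|G|=16$, the orders $2$, $4$ and $8$ having already been settled in Lemmas~\ref{LEM:ORDER2}--\ref{LEM:Q8}, and to reduce at once to four isomorphism types. Indeed, by Remark~\ref{REM:DIMFIX} the hypothesis forces $\Dim\Fix(G)=0$, and the three preceding propositions eliminate every order-$16$ group except $\ZZ_8\oplus\ZZ_2$, $\ZZ_4\oplus\ZZ_4$, $Q_8\oplus\ZZ_2$ and $\ZZ_4\ltimes\ZZ_4$. For each surviving type the strategy is uniform: first normalize a distinguished subgroup by the relevant order-$4$ or order-$8$ lemma, then determine the remaining generator from the defining relations, and finally clear the left-over parameters by a conjugation that preserves the part already put in normal form. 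Throughout I would use that any $g\in G$ inherits from $s$ an eigensection with $V(s)\cap\Fix\span{g}=\emptyset$, so Lemmas~\ref{LEM:ORDER4}, \ref{LEM:Z8} and \ref{LEM:Q8} apply to the elements of $G$.

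For the two cases with a central $\ZZ_2$ factor, $\ZZ_8\oplus\ZZ_2$ and $Q_8\oplus\ZZ_2$, I would invoke Lemma~\ref{LEM:Z8} (resp. Lemma~\ref{LEM:Q8}) to conjugate the cyclic order-$8$ subgroup (resp. the $Q_8$-subgroup) into the stated form. The complementary $\ZZ_2$ is central, hence generated by an involution $z$ commuting with the normalized generators; by Lemma~\ref{LEM:ORDER2}, $\pi(z)=\id$ and $z=(C_i)$ with each $C_i$ of order $2$. Writing out commutativity with a generator $(A_i)\circ\sigma$ through the conjugation formula forces $C_i=C_{\sigma(i)}$ and $[C_i,A_i]=\id$; running over both generators collapses this to a single $C$ commuting with $A$, so $C\in\{A,B,AB\}$. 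As $C=A$ would place $z$ inside the order-$8$ (resp. $Q_8$) subgroup, we get $C\in\{B,AB\}$, and replacing $z$ by $z\cdot(A,A,A,A)$ sends $AB$ to $B$; thus $z=(B,B,B,B)$, the asserted extra generator.

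The abelian case $\ZZ_4\oplus\ZZ_4$ and the twisted case $\ZZ_4\ltimes\ZZ_4$ both begin by normalizing one order-$4$ generator to $g=(\id,A,\id,A)\circ(12)(34)$ via Lemma~\ref{LEM:ORDER4}, so $g^2=(A,A,A,A)$. A short order count then shows $\pi(G)$ is the full Klein four-group: if $|\pi(G)|=2$ then $\Ker\pi\cap G$ would have order $8$ with every element of order $\le 2$ (an order-$4$ element needs $\pi\neq\id$ by Lemma~\ref{LEM:ORDER4}), hence would be a copy of $\ZZ_2^{\oplus 3}$, contradicting Lemma~\ref{LEM:NOZ2^3}. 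Consequently the second generator $h=(B_i)\circ\tau$ has $\tau\in\{(13)(24),(14)(23)\}$, and after relabelling the last two factors we may take $\tau=(13)(24)$. Expanding the defining relation --- $[g,h]=\id$ in case $(2)$ and $hgh^{-1}=g^{-1}$ in case $(3)$ --- via the conjugation formula yields linear constraints that tie $B_1,B_2$ and $B_3,B_4$ together and force each to commute with $A$; a final conjugation by a diagonal $k=(C_i)$ centralizing $g$ then clears them to $(\id,\id,B,B)\circ(13)(24)$ and $(\id,A,B,AB)\circ(13)(24)$ respectively.

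I expect the twisted case $(3)$ to be the main obstacle: unlike the direct-product cases, the relation $hgh^{-1}=g^{-1}$ does \emph{not} force $h$ to be constant-diagonal, so one must solve it explicitly inside $\Aut(X)$ and then verify that the conjugations fixing $g$ leave exactly enough freedom to absorb the surviving matrices $B_i$ and reach the stated representative. Finally, for the assertion that all four types actually occur, I would in each case compute $\Fix(G)$ --- a finite set of coordinate-type points arising from the central involutions $(A,A,A,A)$, $(B,B,B,B)$ and $(AB,AB,AB,AB)$ --- and exhibit a multidegree-$(2,2,2,2)$ eigenvector of $G$ in $\H^0(X,\O_X(-K_X))$ not vanishing at any of them; the heaviest verification is $\ZZ_4\oplus\ZZ_4$, whose fixed locus breaks into three families of $16$ points.
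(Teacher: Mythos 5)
Your proposal follows the paper's architecture closely: the reduction to the four isomorphism types via the preceding propositions, and the case-by-case normalization built on Lemmas \ref{LEM:ORDER2}, \ref{LEM:ORDER4}, \ref{LEM:Z8} and \ref{LEM:Q8}, are exactly the paper's. But there is a genuine error where you finish cases (1) and (4). You assert that an involution $C\in\PGL(2)$ commuting with $A$ must lie in $\{A,B,AB\}$. That is false: the centralizer of $A$ in $\PGL(2)$ consists of all diagonal and all antidiagonal classes, and every antidiagonal class $\begin{smallbmatrix} 0 & 1\\ c & 0 \end{smallbmatrix}$, $c\in\C^*$, is an involution (its square is scalar). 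So the correct conclusion is only that $C=A$ or $C$ lies in this one-parameter family; your finite list would be valid inside the Klein group $\span{A,B}$, but nothing confines $C$ to that subgroup. Consequently your final step (replace $z$ by $z\cdot(A,A,A,A)$ to turn $AB$ into $B$) does not reach the normal form $(B,B,B,B)$ when $c\neq\pm 1$. This is precisely where the paper does extra work: it excludes the diagonal possibilities ($C\in\{\id,A\}$ forces $z=g^4$, resp.\ $z\in Q_8$), writes $C=\begin{smallbmatrix} 0 & 1\\ f^2 & 0 \end{smallbmatrix}$, and then conjugates by the constant tuple $(D,D,D,D)$ with $D=\begin{smallbmatrix} 1 & 0\\ 0 & f \end{smallbmatrix}$, which fixes the already-normalized generators (their $\PGL(2)$-components are $\id$ or $A$, hence commute with diagonal matrices) and carries $C$ to $B$. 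The same issue is latent in your cases (2) and (3): since you normalize only $g$ (the paper instead normalizes the whole subgroup $\span{g,h^2}$ via Lemma \ref{LEM:Z4xZ2}, so that $h^2=(B,B,B,B)$ is known), the surviving matrices are arbitrary elements of the centralizer of $A$, and ``absorbing'' them requires showing that the relevant antidiagonal involution is diagonally conjugate to $B$ --- the step you flag in case (3) but never actually supply.

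A second, smaller gap: in cases (2) and (3) you pass from $\pi(h)=(14)(23)$ to $(13)(24)$ by ``relabelling the last two factors'', i.e.\ conjugating by $(34)\in S_4\subset\Aut(X)$. This conjugation does not preserve your normalization of $g$: it sends $(\id,A,\id,A)\circ(12)(34)$ to $(\id,A,A,\id)\circ(12)(34)$, so the normal form just established is destroyed (it can be restored by a further conjugation, but you do not address this). The paper avoids the problem by replacing the generator $h$ with $gh$, which is legitimate in both $\ZZ_4\oplus\ZZ_4$ and $\ZZ_4\ltimes\ZZ_4$ (the element $gh$ again has order $4$ and generates a complement of $\span{g}$) and leaves $g$ untouched.
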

\begin{proof} By the previous propositions, $G$ is isomorphic to $\Z_8\oplus\Z_2$, $\Z_4\oplus\Z_4$,
$Q_8\oplus\Z_2$ or  $\Z_4\ltimes \Z_4$.
\smallskip

\paragraph{\bf Case (1)} Let $G\simeq \Z_8\oplus \Z_2$ with generators $g,h$ of order $8$ and $2$, respectively.
By Lemma \ref{LEM:Z8}, up to conjugacy, $g=(\id,\id,\id,A)\circ (1324)$. The element $h$ has order $2$ so it is of
the form $h=(B_1,B_2,B_3,B_4)$, with $B_i^2=\id$. Since $gh=hg$,
$$B_3=B_1, B_4=B_2, B_2=B_3 \mbox{ and } AB_1=B_4A,$$ so that $B_i\equiv B_1$ for all $i$, and $B_1AB_1^{-1}A^{-1}=B_1^2=\id$.
The elements of PGL(2) commuting with $A$ leave invariant the set of fixed points of $A$; they are represented
by a matrix which is either diagonal (if $B_1$ fixes both points) or antidiagonal (if $B_1$ exchanges them).
In the diagonal case, since $B_1^2=\id$, we get $B_1 \in \{\id, A\}$ and then $g^4=h$, a contradiction.

\noindent We deduce that $B_1$ is represented by an antidiagonal matrix
\[B_1=\begin{smallbmatrix}
0 & 1 \\
f^2 & 0
\end{smallbmatrix}
\]
for some $f\in \C^*$. Set $C=\begin{smallbmatrix}
1 & 0 \\
0 & f
\end{smallbmatrix}$ and $k:=(C,C,C,C)$. Then
\[k^{-1}gk=g\quad \text{and}\quad
k^{-1}hk=(B,B,B,B).
\]
\smallskip

\paragraph{\bf Case (2)} Let $G\simeq \Z_4\oplus \Z_4$. Let $g$ and $h$ be the two generators of the two
factors of $G$. We know that the subgroup $\span{g,h^2}\simeq \Z_4\oplus \Z_2$ is conjugated to
$$\span{(\id,A,\id,A)\circ (12)(34),(B,B,B,B)},$$ so we may assume $g=(\id,A,\id,A)\circ (12)(34)$ and $h^2=(B,B,B,B)$.
By Lemma \ref{LEM:ORDER4},
$\sigma:=\pi(h)\in\{(12)(34),(13)(24),(14)(23)\}$. Since the same is true for $\pi(gh)$, we have
$\sigma\neq(12)(34)$. By replacing $h$ with $gh$, we may assume $\sigma=(13)(24)$.
Since $gh=hg$ and $h^2=(B,B,B,B)$,
$$h=(C,C,C^{-1}B,C^{-1}B)\circ (13)(24)$$ with $CAC^{-1}A^{-1}=CBC^{-1}B^{-1}=\id$. Conjugation with $k:=(C,C,\id,\id)$
sends $g$ to $g$ and $h$ to $(\id,\id,B,B)\circ (13)(24)$.
\smallskip

\paragraph{\bf Case (3)}
Let $G\simeq \Z_4\ltimes \Z_4$, $G=\span{g,h\,|\,g^4=h^4=ghgh^{-1}=\id}$. Take $g$ and $h$ to be generators
of the two $\Z_4$. As with the case of $\Z_4\oplus\Z_4$, we may assume $g=(\id,A,\id,A)\circ (12)(34)$, $h^2=(B,B,B,B)$ and $\pi(h)= (13)(24)$.
Since $ghg=h$,
$$h=(C,AC,BC^{-1},ABC^{-1})\circ (13)(24)$$ with $CAC^{-1}A^{-1}=CBC^{-1}B^{-1}=\id$. Conjugation with
$k:=(C,C,\id,\id)$ sends $g$ to $g$ and $h$ to $(\id,A,B,AB)\circ (13)(24)$.
\smallskip

\paragraph{\bf Case (4)}
Let $G\simeq Q_8\oplus \Z_2$. By Lemma \ref{LEM:Q8}, we can assume that $Q_8=\span{i,j}$ with
$i=(\id,A,\id,A)\circ(12)(34)$ and $j=(\id,A,A,\id)\circ (13)(24)$. Let $h=(C_1,C_2,C_3,C_4)$ be the generator of the
$\Z_2$ factor. Since $ih=hi$ and $jh=hj$, $C_1=C_2=C_3=C_4=:C$ with $C$ commuting with $A$ and $C^2=\id$. These two
conditions are satisfied if $C=\id$, $C=A$ or $C$ is antidiagonal.
If $C=\id$ or $C=A$ then $h\in Q_8$ which is impossible.
Then $C=\begin{smallbmatrix}
0 & 1 \\
f^2 & 0
\end{smallbmatrix}$ for some $f\in \C^*$.  Let $D=\begin{smallbmatrix}
0 & 1 \\
f & 0
\end{smallbmatrix}$. Conjugation by $(D,D,D,D)$ fixes $i$ and $j$ and sends $h$ to $(B,B,B,B)$.
\end{proof}

%--------------------------------------------------------------

\section{Heights and Hodge numbers for $Y/G$}

In what follows $G$ is a finite subgroup of $\Aut(X)$ and $Y$ is a smooth Calabi--Yau threefold in $X=\PP^1\times \PP^1\times \PP^1\times \PP^1$
such that $(Y,G)$ is an admissible pair, i.e., $G\leq \Stab_{\Aut(X)}(Y)$ and $\Fix(G)\cap Y=\emptyset$. We will
compute all pairs of Hodge numbers $(\h^{1,1},\h^{1,2})$ and relative heights $h:=\h^{1,1}+\h^{1,2}$ of $Y/G$. Every case is realized
as a partial quotient of the examples of section \ref{SECT:GOODGORDER16}. For the reader convenience we recall 
that, for every anticanonical divisor $Y$ in $X$, $\chi(Y)=-128$ (by, {\it e.g.}, \cite[Theorem 3.1]{BF}) and that for every Calabi-Yau $Z$,
$\chi(Z)=2(\h^{1,1}(Z)-\h^{1,2}(Z))$.
\vspace{2mm}

\noindent Recall that, for an admissible pair $(Y,G)$,
$\chi(Y/G)=\chi(Y)/|G|$ and
$$\H^{1,1}(Y/G)=\H^{1,1}(Y)^G=(\Pic(X)\otimes\C)^G=(\Pic(X)\otimes\C)^{\pi(G)}.$$
Hence it suffices to consider the image $\pi(G)$. For example, we have seen in Theorem \ref{THM:4GPS} that, if
$G\simeq \Z_4\ltimes \Z_4$ and $(Y,G)$ is an admissible pair then $\pi(G)=\span{(12)(34),(13)(24)}$. The action of $G$ on
$\C^4=\Pic(X)\otimes\C$ has an invariant space of dimension $1$ (the diagonal) so $\h^{1,1}(Y/G)=1$.
\vspace{4mm}

\noindent If $|G|=2$ one has $G\simeq \Z_2$, $\chi(Y/G)=-64$, $\pi(G)=\{\id\}$ and
\begin{itemize}
\item $(\h^{1,1}(Y/G),\h^{1,2}(Y/G))=(4,36)$ and $h(Y/G)=40$.
\end{itemize}

% --------------------------------------------------------------------

\noindent If $|G|=4$ then $\chi(Y/G)=-32$ and one of the following holds:
\begin{itemize}\setlength{\itemsep}{.2cm}
\item $G\simeq \Z_2\oplus\Z_2$, $\pi(G)=\{\id\}$, $(\h^{1,1}(Y/G),\h^{1,2}(Y/G))=(4,20)$ and $h(Y/G)=24$;
\item $G\simeq \Z_4$, $\pi(G)=\span{(12)(34)}$, $(\h^{1,1}(Y/G),\h^{1,2}(Y/G))=(2,18)$ and $h(Y/G)=20$.
\end{itemize}

% --------------------------------------------------------------------

\noindent If $G$ has order $8$, then $\chi(Y/G)=-16$ and one of the following holds:
\begin{itemize}\setlength{\itemsep}{.2cm}
\item $G\simeq \Z_8$  with $\pi(G)\simeq \span{(1234)}\simeq\Z_4$,\newline
$(\h^{1,1}(Y/G),\h^{1,2}(Y/G))=(1,9)$ and $h(Y/G)=10$;
\item $G\simeq Q_8$ with $\pi(G)=\span{(12)(34),(13)(24)}\simeq \Z_2\oplus\Z_2$,\newline
$(\h^{1,1}(Y/G),\h^{1,2}(Y/G))=(1,9)$ and $h(Y/G)=10$;
\item $G\simeq \Z_4\oplus\Z_2$ with $\pi(G)\simeq \span{(12)(34)}\simeq \Z_2$,\newline
$(\h^{1,1}(Y/G),\h^{1,2}(Y/G))=(2,10)$ and $h(Y/G)=12$.
\end{itemize}

% --------------------------------------------------------------------

\noindent Finally, if $|G|=16$, then $\chi(Y/G)=-8$ and each
quotient has the same Hodge numbers and height equal to $(1,5)$ and $6$, respectively. More precisely one of the following holds:
\begin{itemize}\setlength{\itemsep}{.2cm}
\item $G\simeq \Z_8\oplus \Z_2$  with $\pi(G)\simeq \span{(1234)}\simeq\Z_4$;
\item $G\simeq Q_8\oplus\Z_2$ with $\pi(G)=\span{(12)(34),(13)(24)}\simeq \Z_2\oplus\Z_2$;
\item $G\simeq \Z_4\oplus\Z_4$ with $\pi(G)\simeq \span{(12)(34),(13)(24)}\simeq \Z_2\oplus\Z_2$;
\item $G\simeq \Z_4\ltimes\Z_4$ with $\pi(G)\simeq \span{(12)(34),(13)(24)}\simeq \Z_2\oplus\Z_2$.
\end{itemize}

\section{Admissible pairs with group of order $16$}
\label{SECT:GOODGORDER16}
In this section we give examples of admissible pairs $(Y,G)$ with $G$ maximal, i.e., with $|G|=16$.
To be more precise, we give an example for each subgroup $G$ of $\Aut(X)$ of order $16$ that acts freely on a
Calabi--Yau threefold which is stabilized by $G$. As previously said, $Y/G$
have Hodge numbers $(1,5)$. By taking partial quotients, one obtains all the other examples.

\medskip

\subsection{Admissible pairs with $G=\Z_4\ltimes \Z_4$}\ \newline
By Theorem \ref{THM:4GPS}, up to conjugation  $G:=\left<g,h\right > \subset \Aut(X)$ with
\begin{equation}\label{eq: action by Z_4ltimesZ_4}
\renewcommand{\arraystretch}{1.5}
\begin{array}{l}
g:=\left(
\id,
A,\id,A,\right)\circ (12)(34),\\
h:=\left(\id,A,B,AB\right)\circ (13)(24).
\end{array}
\end{equation}
$G$ is a $2$-group with $3$ elements of order $2$, namely $g^2=\left(A,A,A,A\right)$,
$h^2=\left(B,B,B,B\right)$, $g^2h^2=\left(AB,AB,AB,AB\right)$. Therefore $\Fix (G)=\Fix(g^2) \cup \Fix(h^2) \cup \Fix(g^2h^2)$.
These are $3$ disjoint sets of $16$ points each:
\begin{equation}\label{FixedPoints}
\renewcommand{\arraystretch}{1.3}
\begin{array}{l}
\Fix(g^2)=\{(\ux_1,\ux_2,\ux_3,\ux_4) : \forall i\ \ux_i\in\set{(0,1),(1,0)}\},\\
\Fix(h^2)=\{(\ux_1,\ux_2,\ux_3,\ux_4) : \forall i\ \ux_i\in\set{(1,1),(1,-1)}\},\\
\Fix(g^2 h^2)=\{(\ux_1,\ux_2,\ux_3,\ux_4) : \forall i\ \ux_i\in\set{(1,i),(1,-i)}.\}
\end{array}
\end{equation}

\noindent
$H^0(\OO_X(2,2,2,2))$ has basis $\set{x_{10}^{a_1}x_{11}^{b_1}x_{20}^{a_2}x_{21}^{b_2}x_{30}^{a_3}x_{31}^{b_3}x_{40}^{a_4}x_{41}^{b_4} : a_1+b_1=\cdots=a_4+b_4=2}$.
Consider $Q_0,\dots,Q_5\in H^0(\OO_X(2,2,2,2))$ given by:
\begin{equation}\label{eq: generators of the linear system of quadrics}
\renewcommand{\arraystretch}{1.35}
\begin{array}{l}
Q_0 := \textstyle\prod_{i=1}^4 x_{i0}x_{i1}, \\
Q_1 := (x_{11}^2x_{20}^2 +x_{10}^2x_{21}^2)(x_{31}^2 x_{40}^2+ x_{30}^2x_{41}^2), \\
Q_2 := x_{20}x_{21}x_{30}x_{31}(x_{10}^2+x_{11}^2)(x_{40}^2+x_{41}^2)-x_{10}x_{11}x_{40}x_{41}(x_{20}^2+x_{21}^2)(x_{30}^2+x_{31}^2), \\
Q_3 := x_{10}x_{11}x_{30}x_{31}(x_{20}^2+x_{21}^2)(x_{40}^2+x_{41}^2)+x_{20}x_{21}x_{40}x_{41}(x_{10}^2+x_{11}^2)(x_{30}^2+x_{31}^2), \\
Q_4 := (x_{10}^2+x_{11}^2)(x_{20}^2+x_{21}^2)(x_{30}^2+x_{31}^2)(x_{40}^2+x_{41}^2),\\
Q_5 := (x_{10}^2x_{20}^2+x_{11}^2x_{21}^2)(x_{30}^2x_{40}^2+x_{31}^2x_{41}^2).
\end{array}
\end{equation}

\noindent
Denote by $\LL\subset \ls{\OO_X(2,2,2,2)}$ the linear system generated by $Q_0,\dots,Q_5$.

\begin{lem}\label{CYlem}
The linear system $\LL$ is $5$-dimensional and has $64$ base points, given by:
\[\renewcommand{\arraystretch}{1.3}
\begin{array}{l}
\set{x_{10}x_{11}=x_{20}x_{21}=x_{30}^2+x_{31}^2=x_{40}^2-x_{41}^2=0}\cup \\
\set{x_{10}x_{11}=x_{20}x_{21}=x_{30}^2-x_{31}^2=x_{40}^2+x_{41}^2=0}\cup \\
\set{x_{10}^2+x_{11}^2=x_{20}^2-x_{21}^2=x_{30}x_{31}=x_{40}x_{41}=0}\cup \\
\set{x_{10}^2-x_{11}^2=x_{20}^2+x_{21}^2=x_{30}x_{31}=x_{40}x_{41}=0}\cdot
\end{array}
\]
\end{lem}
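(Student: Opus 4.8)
The plan is to first pass to coordinates on each factor that linearize the whole discussion. For $i=1,\dots,4$ set $s_i=x_{i0}^2+x_{i1}^2$, $d_i=x_{i0}^2-x_{i1}^2$ and $e_i=x_{i0}x_{i1}$, which satisfy the single relation $s_i^2-d_i^2=4e_i^2$. A direct computation rewrites the generators, up to nonzero constants, as
\[
Q_0=e_1e_2e_3e_4,\quad Q_4=s_1s_2s_3s_4,\quad Q_1=(s_1s_2-d_1d_2)(s_3s_4-d_3d_4),
\]
\[
Q_5=(s_1s_2+d_1d_2)(s_3s_4+d_3d_4),\quad Q_2=s_1s_4e_2e_3-s_2s_3e_1e_4,\quad Q_3=s_2s_4e_1e_3+s_1s_3e_2e_4.
\]
The crucial elementary fact is that, since $(x_{i0}:x_{i1})$ is a genuine point of $\P^1$, the relation forbids two of $s_i,d_i,e_i$ from vanishing at once, so for each factor \emph{at most one} of them is zero. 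Moreover $e_i=0$, $s_i=0$, $d_i=0$ cut out exactly $\{x_{i0}x_{i1}=0\}$, $\{x_{i0}^2+x_{i1}^2=0\}$, $\{x_{i0}^2-x_{i1}^2=0\}$ respectively; thus the four sets in the statement are precisely the loci where the four factors are in the states $(e,e,s,d)=0$ (Set~1), $(e,e,d,s)=0$ (Set~2), $(s,d,e,e)=0$ (Set~3), $(d,s,e,e)=0$ (Set~4).

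For the dimension I would prove $Q_0,\dots,Q_5$ linearly independent by tracking, in each monomial, which factors are \emph{balanced} (exponent pattern $(1,1)$) and which are \emph{polarized} (pattern $(2,0)$ or $(0,2)$). Then $Q_0$ is the unique all--balanced monomial; the monomials of $Q_2$ (resp.\ $Q_3$) have their balanced factors in $\{1,4\}$ or $\{2,3\}$ (resp.\ $\{1,3\}$ or $\{2,4\}$); and $Q_1,Q_4,Q_5$ are all--polarized. This separates $Q_0,Q_2,Q_3$ from one another and from $\span{Q_1,Q_4,Q_5}$ by monomial support. Within the all--polarized block, indexing the $16$ monomials by $(\epsilon_1,\epsilon_2,\epsilon_3,\epsilon_4)\in\{0,1\}^4$, the form $Q_1$ is supported on $\{\epsilon_1\neq\epsilon_2,\ \epsilon_3\neq\epsilon_4\}$, $Q_5$ on $\{\epsilon_1=\epsilon_2,\ \epsilon_3=\epsilon_4\}$, and $Q_4$ on all sixteen, so these three are independent as well. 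Hence $\Dim\LL=5$.

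For the base locus I would exploit the symmetry. The group $\pi(G)=\span{(12)(34),(13)(24)}$ acts on the rewritten generators fixing $Q_0,Q_1,Q_3,Q_4,Q_5$ and negating $Q_2$, hence it preserves the base locus of $\LL$; and one checks it permutes the four sets transitively (for instance $(12)(34)$ swaps Set~1 with Set~2 and Set~3 with Set~4, while $(13)(24)$ swaps Set~1 with Set~3 and Set~2 with Set~4). The inclusion $\supseteq$ then only needs to be verified on one set, say Set~1 ($e_1=e_2=s_3=d_4=0$), where every rewritten $Q_j$ visibly vanishes.

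The reverse inclusion is the main obstacle, and the substitution above is exactly what makes it tractable. Let $P$ be a base point. From $Q_4=0$ we get $s_1s_2=0$ or $s_3s_4=0$; applying $(13)(24)$ if necessary, I may assume $s_1s_2=0$. Substituting this into $Q_1=Q_5=0$ splits matters into two cases: either $d_1d_2=0$ too, or $s_3s_4=d_3d_4=0$. In the first case the ``at most one vanishes'' principle forces $\{s_1=0,d_2=0\}$ or $\{d_1=0,s_2=0\}$, so $d_1,d_2,e_1,e_2\neq 0$; then $Q_0=0$ yields $e_3e_4=0$, while $Q_2=Q_3=0$ (using that one of $s_1,s_2$ is zero) yield $s_3e_4=s_4e_3=0$, and since $e_i=0\Rightarrow s_i\neq 0$ this forces $e_3=e_4=0$, landing exactly in Set~3 or Set~4. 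In the second case factors $3,4$ are forced into states $\{s,d\}$ (so $e_3,e_4\neq 0$), $Q_0=0$ gives $e_1e_2=0$, and $Q_2=Q_3=0$ give $s_1e_2=s_2e_1=0$; together with $s_1s_2=0$ and $d_1d_2\neq 0$ this forces $s_1=s_2=0$ and hence $e_1,e_2\neq 0$, contradicting $e_1e_2=0$, so this case is empty. Thus every base point with $s_1s_2=0$ lies in Set~3$\cup$Set~4, and by the symmetry every base point lies in the union of the four sets. Finally the four sets are pairwise disjoint (they impose incompatible conditions on factor~$1$, or on factor~$3$ for Set~1 versus Set~2) and each consists of $2^4=16$ points, giving exactly $64$ base points.
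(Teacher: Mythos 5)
Your proof is correct and follows the same basic skeleton as the paper's---linear independence via disjoint monomial supports, then a case analysis for the base locus driven by $Q_4=0$ and $Q_0=0$---but your organization is genuinely different and tighter. The paper argues directly on the coordinates $x_{i0},x_{i1}$: it assumes $x_{10}^2+x_{11}^2=0$, eliminates step by step (proving $x_{20}x_{21}\neq 0$ by exhibiting an unsolvable system in the last two factors, and so on), and closes with ``the statement follows by repeating the same argument'' for the other starting assumptions, leaving the reverse inclusion and the count of $64$ implicit. Your substitution $s_i=x_{i0}^2+x_{i1}^2$, $d_i=x_{i0}^2-x_{i1}^2$, $e_i=x_{i0}x_{i1}$ with $s_i^2-d_i^2=4e_i^2$ condenses every nonvanishing argument into a single principle (at most one of $s_i,d_i,e_i$ vanishes per factor), and your observation that $(12)(34)$ and $(13)(24)$ send each $Q_j$ to $\pm Q_j$ and permute the four sets transitively upgrades the paper's informal repetition to an honest reduction to one case for each inclusion; it also yields disjointness and the count of $4\cdot 2^4=64$ cleanly. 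One slip should be fixed: in Case A ($s_1s_2=d_1d_2=0$) you write ``so $d_1,d_2,e_1,e_2\neq 0$'', which contradicts the two configurations you have just listed, since each of them has one of $d_1,d_2$ equal to zero. What your subsequent steps actually use---and what the at-most-one-vanishes principle does give---is $e_1,e_2\neq 0$ together with $d_1,s_2\neq 0$ in the configuration $\{s_1=0,\,d_2=0\}$ (respectively $s_1,d_2\neq 0$ in the configuration $\{d_1=0,\,s_2=0\}$); with that restated, the deductions $e_3e_4=0$ and $s_3e_4=s_4e_3=0$, hence $e_3=e_4=0$ and membership in Set~3 or Set~4, all go through, and the contradiction closing Case B is likewise sound.
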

\begin{proof}
It is not hard to see that the $Q_i$ are linearly independent, for example, by expanding $Q_0,Q_1,Q_2,Q_3,Q_4-Q_1-Q_5,Q_5$ and checking
that there is no common monomial in any two of them.
\medskip

\noindent
A base point of $\LL$ must satisfy $(x_{10}^2+x_{11}^2)(x_{20}^2+x_{21}^2)(x_{30}^2+x_{31}^2)(x_{40}^2+x_{41}^2)=0$.
Assume that $x_{10}^2+x_{11}^2=0$. Using $Q_0$ we get $x_{20}x_{21}x_{30}x_{31}x_{40}x_{41}=0$. Notice that $x_{20}x_{21}\not =0$ for,
otherwise, $Q_1=Q_2=Q_3=Q_5=0$ would reduce to
\[\renewcommand{\arraystretch}{1.3}
\begin{cases}
x_{31}^2x_{40}^2 + x_{30}^2x_{41}^2 = 0\\
x_{40}x_{41}(x_{30}^2+x_{31}^2)=0 \\
x_{30}x_{31}(x_{40}^2+x_{41}^2)=0 \\
x_{30}^2x_{40}^2 + x_{31}^2x_{41}^2 = 0
\end{cases}
\]
which is impossible. Then $x_{30}x_{31}x_{40}x_{41}=0$ and at least one among $x_{30}x_{31}$ and $x_{40}x_{41}$ vanish.
Suppose  $x_{30}x_{31}=0$.
Then $Q_2$ reduces to $x_{40}x_{41}(x_{20}^2+x_{21}^2)=0$. Arguing as above we can show that $x_{20}^2+x_{21}^2\not = 0$.
Hence also $x_{40}x_{41}=0$ and either $Q_1$ or $Q_5$ reduces to $x_{20}^2-x_{21}^2=0$.
Assuming  $x_{40}x_{41}=0$ instead of  $x_{30}x_{31}=0$ a similar argument leads to the same conclusion.
The statement follows by repeating the
same argument with starting assumption $x_{20}^2+x_{21}^2=0$, $x_{30}^2+x_{31}^2=0$ or $x_{40}^2+x_{41}^2=0$.
\end{proof}

\begin{cor}
The general $Y \in \LL$ is a Calabi--Yau $3$-fold on which $G$ acts freely.
\end{cor}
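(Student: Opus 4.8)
The plan is to establish the two assertions of the corollary separately: first that the general member $Y \in \LL$ is a smooth Calabi--Yau threefold, and second that $G$ acts freely on it. For the Calabi--Yau property, the key observation is that $Y$ is an anticanonical divisor in $X = (\PP^1)^4$, since $\O_X(-K_X) = \O_X(2,2,2,2)$; thus any smooth $Y \in \LL$ automatically has trivial canonical bundle by adjunction and is connected of the right dimension, so I only need smoothness of the general member. By Bertini's theorem, the general $Y \in \LL$ is smooth away from the base locus of $\LL$, so the real content is to verify smoothness at the $64$ base points listed in Lemma~\ref{CYlem}. I would do this by a local computation: at each base point $P$, I would exhibit members of $\LL$ whose differentials at $P$ span the cotangent space (equivalently, check that the linear system separates tangent directions there), so that the general $Y$ is smooth at $P$ as well.

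Second, I would prove freeness of the $G$-action. Recall from the displayed formulas \eqref{FixedPoints} that $\Fix(G) = \Fix(g^2)\cup\Fix(h^2)\cup\Fix(g^2h^2)$ consists of $48$ explicit points, the three disjoint sets of $16$ coordinate-type points. The action on $Y$ is free precisely when $Y$ avoids all of $\Fix(G)$, i.e. when $\Fix(G)\cap V(s) = \emptyset$ for the defining section $s$ of $Y$. So it suffices to show that a general member of $\LL$ does not pass through any of these $48$ fixed points. For each of the three sets in \eqref{FixedPoints}, I would evaluate the generators $Q_0,\dots,Q_5$ and check that they do not all vanish simultaneously at any such point; concretely, $Q_0 = \prod x_{i0}x_{i1}$ is nonzero on $\Fix(h^2)$ and $\Fix(g^2h^2)$ (whose coordinates have both entries nonzero), ruling those out immediately, while on $\Fix(g^2)$ one has $Q_0 = 0$ but one of the other $Q_i$ is nonvanishing. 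Hence no fixed point lies on $V(Q_i)$ for all $i$, so no fixed point lies in the base locus of $\LL$, and a general $Y$ misses $\Fix(G)$ entirely.

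The main subtlety I anticipate is not freeness --- which reduces to the finite check just described against the explicit lists \eqref{FixedPoints} and \eqref{eq: generators of the linear system of quadrics} --- but the smoothness verification at the $64$ base points. Away from the base locus Bertini gives smoothness for free, but at base points one cannot invoke Bertini directly and must argue pointwise. Because $G$ permutes the four factors via $\pi(G) = \span{(12)(34),(13)(24)}$ and the base locus in Lemma~\ref{CYlem} is visibly organized into orbits under this symmetry, I would exploit the $G$-action to reduce the $64$ checks to a single representative point in each orbit, cutting the computation drastically. At a representative base point I would pass to an affine chart, compute the partial derivatives of the $Q_i$, and confirm that their common zero locus is smooth of the expected codimension there; equivalently, I would verify that $\LL$ has no base point of excess multiplicity, so that the general $Y$ acquires no singularity along the base locus. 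Once smoothness at all base points is settled, combining it with generic smoothness off the base locus yields a smooth $Y$, and together with the freeness check the corollary follows.
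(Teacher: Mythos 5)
Your overall strategy (Bertini off the base locus, a pointwise smoothness check at the $64$ base points of Lemma~\ref{CYlem}, and a finite check that the $48$ points of $\Fix(G)$ in \eqref{FixedPoints} avoid the base locus so that the general member misses them) is exactly the paper's strategy. But there is a genuine gap at the very start of your freeness argument: you never verify that $G$ acts on $Y$ at all. The statement ``$G$ acts freely on $Y$'' presupposes that each member of $\LL$ is $G$-invariant, i.e.\ that $g$ and $h$ carry $V(s)$ to itself for every $s\in\span{Q_0,\dots,Q_5}$; nothing proved before this corollary guarantees it (Lemma~\ref{CYlem} only records the dimension and base locus of $\LL$). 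The paper's proof makes this its first step, observing that $g$ and $h$ fix each of the forms $Q_0,\dots,Q_5$ in \eqref{eq: generators of the linear system of quadrics}, so that $G$ preserves every $Y\in\LL$. Note this requires the $Q_i$ to be (semi-)invariant with a \emph{common} character of $G$: if two generators transformed by different characters, individual divisors $(Q_i=0)$ would still be invariant but a general linear combination would not be. Your proof as written jumps straight to ``the action on $Y$ is free precisely when $Y$ avoids $\Fix(G)$,'' which is only meaningful once invariance is established.

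A second, smaller gap is in the Calabi--Yau claim: trivial $\omega_Y$ plus connectedness is not the property the paper needs. The paper invokes the Lefschetz hyperplane section theorem to get that $Y$ is simply connected, which (together with $\omega_Y\cong\O_Y$) gives $h^{1,0}(Y)=h^{2,0}(Y)=0$, i.e.\ a Calabi--Yau in the sense used throughout, and which is essential later when concluding $\pi_1(Y/G)\cong G$. Your smoothness step is fine in substance but stronger than necessary: to get smoothness of the general member at a base point $P$ one only needs a \emph{single} member of $\LL$ smooth at $P$ (the members singular at $P$ form a proper linear subsystem), which is what the paper does by checking that the one divisor $(Q_4=0)$ is smooth at all $64$ base points; asking the differentials of the $Q_i$ to span the cotangent space at $P$ is overkill, though harmless.
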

\begin{proof}
Notice that $G$ leaves invariant each of the divisors $(Q_i=0)$, $i=0,\dots,5$; indeed
$g$ and $h$ fix all of $Q_i$. In particular, the action of $G$
induces an action on every $Y \in \LL$. Moreover, the base locus of $\LL$ computed in Lemma \ref{CYlem}
does not contain any of the fixed points of the action, given in (\ref{FixedPoints}); hence the action
on the general $Y\in \LL$ is free.
\medskip

\noindent
Consider the points $[(\pm i,1),(\pm 1,1),(0,1),(0,1)]$ of the base locus of $\LL$. After localizing at
the affine open set of $X$ given by $x_{11}=x_{21}=x_{31}=x_{41}=1$, the equation of $Q_4$ becomes
$(x_{10}^2+1)(x_{20}^2+1)(x_{30}^2+1)(x_{40}^2+1)$.
It is easy to see that this equation defines a hypersurface in $\CC^4$ smooth at the points $(\pm i,\pm 1,0,0)$.
Similarly, one checks that $(Q_4=0)$ is smooth at all of the $64$ base points in Lemma \ref{CYlem}.
By Bertini's Theorem, it follows that the general $Y\in \LL$ is smooth.
By Lefschetz Hyperplane Section Theorem, $Y$ is simply connected and since
$\omega_Y=\omega_X(2,2,2,2)_{|Y}={\mathcal O}_Y$, $Y$ is a Calabi--Yau.
\end{proof}

\subsection{Admissible pairs with $G=\Z_4\oplus \Z_4$}\ \newline
By Theorem \ref{THM:4GPS}, up to conjugation, $G:=\left <g,h\right >$ with
\begin{equation}\label{eq: action by Z_4xZ_4}
\renewcommand{\arraystretch}{1.5}
\begin{array}{l}
g:=\left( \id,A,\id,A \right)\circ (12)(34),\\
h:=\left(\id,\id,B,B\right)\circ (13)(24)
\end{array}
\end{equation}
The following $6$ homogeneous forms
\[\renewcommand{\arraystretch}{1.35}
\begin{array}{l}
Q'_0 := \textstyle\prod_{i=1}^4 x_{i0}x_{i1}, \\
Q'_1 := (x_{11}^2x_{20}^2 +x_{10}^2x_{21}^2)(x_{31}^2 x_{40}^2+ x_{30}^2x_{41}^2), \\
Q'_2 := x_{20}x_{21}x_{30}x_{31}(x_{10}^2-x_{11}^2)(x_{40}^2-x_{41}^2)-x_{10}x_{11}x_{40}x_{41}(x_{20}^2-x_{21}^2)(x_{30}^2-x_{31}^2), \\
Q'_3 := x_{10}x_{11}x_{30}x_{31}(x_{20}^2+x_{21}^2)(x_{40}^2+x_{41}^2)+x_{20}x_{21}x_{40}x_{41}(x_{10}^2+x_{11}^2)(x_{30}^2+x_{31}^2), \\
Q'_4 := (x_{10}^2+x_{11}^2)(x_{20}^2+x_{21}^2)(x_{30}^2+x_{31}^2)(x_{40}^2+x_{41}^2),\\
Q'_5 := (x_{10}^2x_{20}^2+x_{11}^2x_{21}^2)(x_{30}^2x_{40}^2+x_{31}^2x_{41}^2).
\end{array}
\]
are invariant under the natural action of $G$ on the vector space $\H^0(O_X(2,2,2,2))$. As in the case of $\ZZ_4\ltimes \ZZ_4$, it can
be shown that these forms span a $5$-dimensional linear system whose general member is smooth, has trivial canonical bundle, is simply connected and on which $G$ acts without fixed points.

\subsection{Admissible pairs with $G=\Z_8\oplus \Z_2$}\ \newline
By Theorem \ref{THM:4GPS}, up to conjugation, $G:=\left <g,h\right >$ with
\begin{equation}\label{eq: action of Z_8xZ_2}
\renewcommand{\arraystretch}{1.5}
\begin{array}{l}
g:=\left(\id,\id,\id,A\right)\circ (1324),\\
h:=\left(B,B,B,B\right)
\end{array}
\end{equation}
An explicit Calabi--Yau threefold $Y\subset X$, member of $\ls{\OO_X(2,2,2,2)}$, invariant under the action by $G$ and on which $G$ acts without fixed points
is given in \cite{BF}.

\subsection{Admissible pairs with $G=Q_8\oplus \Z_2$}\ \newline
For this example we refer to \cite{AnnSNS}, where it is given an action of $G$ on $X$
which is not exactly the one described in Theorem \ref{THM:4GPS}, but a conjugated of it. Indeed
in \cite[Theorem 1.1]{AnnSNS} there is a family of hypersurfaces $Z_2 \in |O_X(-K_X)|$ whose general
element is smooth and such that the group acts freely on it. By Theorem  \ref{THM:4GPS}, that action is
conjugated to the one given here, and $(Z_2,G)$ is an admissible pair.

\section{Surfaces of general type}

In this section we consider the families of admissible pairs $(Y,G)$ of a Calabi--Yau $3$-fold $Y\in \ls{\OO_X(2,2,2,2)}$
and a group of order $16$, acting on $X=\PP^1\times \PP^1\times \PP^1 \times \PP^1$ such that $Y$ is invariant
under this action and does not meet the fixed locus of $G$ on $X$. The aim is to find a Fano $3$-fold
$V\in \ls{\OO_X(1,1,1,1)}$, invariant under the action of $G$. Then $G$ acts freely
on the surface $T:=V\cap Y$, and the quotient $T/G$ yields families of
surfaces of general type with $p_g=0$ and $K^2=3$.
\smallskip

\noindent There are only $4$ such admissible pairs. For
$G=Q_8\oplus \ZZ_2$ this has been already done in \cite{AnnSNS}, so we skip this case. Below we show that $V$ does not
exist if $G$ is $\ZZ_8\oplus \ZZ_2$ or $\ZZ_4\oplus \ZZ_4$. The remainder of this section
is concerned with describing a new irreducible component of the moduli space of canonical models of surfaces of general type with
$p_g=0$, $K^2=3$ and fundamental group $\ZZ_4\ltimes \ZZ_4$. To ease notation, let $H$ denote the class of a divisor
in $\ls{\OO_X(1,1,1,1)}$.

\begin{prop}
No divisor in $|{\mathcal O}_{X}(H)|$ is invariant by the action of $\ZZ_8 \oplus \ZZ_2$ nor by the action of  $\ZZ_4 \oplus \ZZ_4$
given in Theorem~\ref{THM:4GPS}.
\end{prop}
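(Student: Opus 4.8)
The plan is to translate invariance of a divisor into a statement about common eigenvectors and then produce an anticommutation obstruction. Set $W:=\H^0(X,\OO_X(1,1,1,1))$, the $16$-dimensional space with monomial basis $\set{x_{1a_1}x_{2a_2}x_{3a_3}x_{4a_4} : a_i\in\set{0,1}}$, and write $G=\span{g,h}$ for either of the two groups with generators as in Theorem~\ref{THM:4GPS}. A divisor $V(s)\in|\OO_X(1,1,1,1)|$ is $G$-invariant exactly when the line $\span{s}\subset W$ is fixed by the induced projective action, i.e.\ when $s$ is a common eigenvector of lifts $G_g,G_h\in\GL(W)$ of $g,h$. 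Since $G$ is generated by $g$ and $h$, it suffices to test these two operators, and the condition is independent of the chosen lifts. So the goal is to show that, in each case, $G_g$ and $G_h$ admit \emph{no} common eigenvector in $W$.

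Next I would record the building blocks of the action on $W$. On the $i$-th tensor factor $A$ acts by $x_{i0}\mapsto x_{i0}$, $x_{i1}\mapsto -x_{i1}$ and $B$ by $x_{i0}\leftrightarrow x_{i1}$; write $\alpha_i,\beta_i\in\GL(W)$ for these single-factor operators and $p_\sigma$ for the operator permuting the factors by $\sigma\in S_4$. The only relations needed are: $\alpha_i,\beta_i$ commute with $\alpha_j,\beta_j$ when $i\neq j$; the factor-$i$ relation $\alpha_i\beta_i=-\beta_i\alpha_i$, which is a scaling-independent fact because the images of $A,B$ commute in $\PGL(2)$ while $ABA^{-1}B^{-1}=-\id$ in $\GL(2)$; and the conjugation rules $p_\sigma\alpha_i p_\sigma^{-1}=\alpha_{\sigma(i)}$, $p_\sigma\beta_i p_\sigma^{-1}=\beta_{\sigma(i)}$. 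With these I take $G_g=\alpha_4\,p_{(1324)}$, $G_h=\beta_1\beta_2\beta_3\beta_4$ in the $\ZZ_8\oplus\ZZ_2$ case, and $G_g=\alpha_2\alpha_4\,p_{(12)(34)}$, $G_h=\beta_3\beta_4\,p_{(13)(24)}$ in the $\ZZ_4\oplus\ZZ_4$ case.

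Finally I would carry out the commutator computation. Pushing the permutation operators to the right only reorders the indices of the $\alpha$'s and $\beta$'s, leaving the unordered products unchanged (and, in the second case, $(12)(34)$ commutes with $(13)(24)$), so in both cases $G_gG_h$ and $G_hG_g$ reduce to the same word in the single-factor operators with the same trailing permutation, the two orders differing by exactly one adjacent pair $\alpha_4\beta_4$. Concretely, $\alpha_4(\beta_1\beta_2\beta_3\beta_4)=-(\beta_1\beta_2\beta_3\beta_4)\alpha_4$ in the first case and $\alpha_2\alpha_4\beta_3\beta_4=-\beta_3\beta_4\alpha_2\alpha_4$ in the second, so that $G_gG_h=-G_hG_g$ on all of $W$. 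Then a common eigenvector $s$ with $G_gs=\lambda s$, $G_hs=\mu s$ (and $\lambda,\mu\neq0$, as the operators are invertible) would give $\lambda\mu s=G_gG_hs=-G_hG_gs=-\lambda\mu s$, forcing $\lambda\mu=0$, a contradiction; hence no invariant divisor exists. The one delicate point — the main obstacle — is verifying that the sign $-1$ genuinely survives after the factor permutations are accounted for; conceptually this is the statement that, although $g$ and $h$ commute in $\Aut(X)$, their relation lifts to a nontrivial ($-1$) commutator on $W$, so that $W$ carries a genuinely projective representation of $G$ with no one-dimensional constituent.
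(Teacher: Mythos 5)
Your proof is correct and follows essentially the same route as the paper: lift the generators $g,h$ to linear operators on $H^0(\OO_X(H))$ via the monomial basis, verify the anticommutation relation $g_1h_1=-h_1g_1$, and conclude that no common eigenvector (hence no invariant divisor) can exist. The only difference is that you organize the ``direct computation'' of the sign through the tensor-factor operators $\alpha_i,\beta_i,p_\sigma$ and their relations, which the paper leaves implicit; both verifications are sound.
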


\begin{proof}
Note that the actions of $A$ and $B$ on $\PP^1$ naturally lift to $H^0(O_{\PP^1}(1))$, by taking
$A(x_0)=x_0$, $A(x_1)=-x_1$, $B(x_1)=x_0$, $B(x_0)=x_1$. Using (\ref{eq: action by Z_4xZ_4}) and
(\ref{eq: action of Z_8xZ_2}), these lifts induce an actions of $G$ on  $H^0(\OO_X(H))$, since
the set of $16$ vectors $x_{1i}x_{2j}x_{3k}x_{4l}$,
\mbox{$(i,j,k,l)\in \set{0,1}^4$} is a natural basis for $H^0(\OO_X(H))$.
\smallskip

\noindent
In both cases, if we denote by $g_1$ the automorphism induced
by $g$ and by $h_1$ the automorphism induced by $h$,
a direct computation shows that $g_1h_1=-h_1g_1$.
\smallskip

\noindent
Assume that there is a geometrically invariant hyperplane section $H$.
Then $H$ is the zero locus of a section $v \in H^0({\mathcal O}_X(H))$, which must be an eigenvector
for both $g_1$ and $h_1$. However, given that $g_1h_1=-h_1g_1$, this is impossible.
\end{proof}

Consider the case $(Y,\ZZ_4\ltimes \ZZ_4)$. Namely, consider the family of Calabi--Yau $3$-folds
$Y\in \LL\subset \ls{\OO_X(2H)}$, where $\LL$ is the linear system generated by the quadrics
(\ref{eq: generators of the linear system of quadrics}). Let $G=\ZZ_4\ltimes \ZZ_4$ act on $X$
as given in (\ref{eq: action by Z_4ltimesZ_4}).
Consider $V=(F_1=0)\in |{\mathcal O}_{X}(H))|$ defined by
\begin{equation}\label{eq: equation of the Fano}
F_1 := (x_{20}x_{30}-x_{21}x_{31})(x_{11}x_{40}+x_{10}x_{41})
-i(x_{20}x_{31}-x_{21}x_{30})(x_{10}x_{40}+x_{11}x_{41}).
\end{equation}

\begin{lem}\label{SingZ1}
$V$ is a Fano 3-fold polarized by  $-K_{V}=H_{|V}$. The singular locus of $V$ is the set of the points
such that $x_{i0}^2-x_{i1}^2=0$, $\forall i$ and $x_{10}x_{20}x_{30}x_{40}=-x_{11}x_{21}x_{31}x_{41}$.
Therefore, $\Sing(V) \subset \Fix(h^2)\subset \Fix(G)$, where $h$ is given in (\ref{eq: action by Z_4ltimesZ_4}).
\end{lem}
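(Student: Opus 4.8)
The plan is to prove the two assertions separately: the Fano property is a formal consequence of adjunction, while the description of $\Sing(V)$ reduces to solving the Jacobian system on $X=(\P^1)^4$. For the first assertion I would apply adjunction on the smooth ambient fourfold. Since $K_X=\mathcal{O}_X(-2,-2,-2,-2)$ and $V\in|\mathcal{O}_X(H)|=|\mathcal{O}_X(1,1,1,1)|$,
\[
K_V=(K_X+V)|_V=\mathcal{O}_X(-1,-1,-1,-1)|_V=-H|_V.
\]
Being a hypersurface in a smooth variety, $V$ is a local complete intersection, hence Gorenstein, so $-K_V=H|_V$ is an honest line bundle; it is ample because it is the restriction of the ample class $H$ on $X$. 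Thus $V$ is a (Gorenstein) Fano threefold polarized by $-K_V=H|_V$.

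For the singular locus I would first record the reduction furnished by the Jacobian criterion. On each factor one has the Euler relation $x_{i0}\,\partial F_1/\partial x_{i0}+x_{i1}\,\partial F_1/\partial x_{i1}=F_1$, so $F_1$ lies in the ideal generated by its eight partial derivatives, and the singular points of $V$ are exactly the common zeros of these partials (which then automatically satisfy $F_1=0$):
\[
\Sing(V)=V\!\left(\frac{\partial F_1}{\partial x_{ij}}:\ i=1,\dots,4,\ j=0,1\right).
\]
The computation is organized by the product structure of $F_1$. Writing
\[
\alpha=x_{20}x_{30}-x_{21}x_{31},\quad \gamma=x_{20}x_{31}-x_{21}x_{30},\quad \beta=x_{11}x_{40}+x_{10}x_{41},\quad \delta=x_{10}x_{40}+x_{11}x_{41},
\]
one has $F_1=\alpha\beta-i\gamma\delta$, where $\alpha,\gamma$ depend only on the second and third coordinates while $\beta,\delta$ depend only on the first and fourth. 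Differentiating in the first and fourth factors produces four equations linear in $(\alpha,\gamma)$, with coefficients built from the first and fourth coordinates; differentiating in the second and third factors produces four equations linear in $(\beta,\delta)$, with coefficients built from the second and third coordinates.

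I would then solve this system by distinguishing whether $(\alpha,\gamma)$ and $(\beta,\delta)$ vanish. If $(\alpha,\gamma)\neq 0$ the corresponding $4\times 2$ coefficient matrix must have rank one, and its vanishing $2\times2$ minors already force $x_{10}^2=x_{11}^2$ and $x_{40}^2=x_{41}^2$; symmetrically, $(\beta,\delta)\neq 0$ forces $x_{20}^2=x_{21}^2$ and $x_{30}^2=x_{31}^2$. Feeding the resulting kernel relations back in, the coefficient $i$ in $F_1$ forces an identity equating a real sign to an imaginary quantity (in the rank-one/rank-one case), while the mixed cases produce contradictory sign conditions; all of these branches are empty. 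The only surviving possibility is $\alpha=\gamma=0$ together with $\beta=\delta=0$. Solving $\alpha=\gamma=0$ as a linear system forces $x_{20}^2=x_{21}^2$ and $x_{30}^2=x_{31}^2$, and $\beta=\delta=0$ forces $x_{10}^2=x_{11}^2$ and $x_{40}^2=x_{41}^2$; a short sign computation then yields $x_{10}x_{20}x_{30}x_{40}=-x_{11}x_{21}x_{31}x_{41}$, which is the description in the statement. I expect this case analysis to be the main obstacle: one must verify that the generic rank-one branches are genuinely empty --- this is precisely where the coefficient $i$ is used --- and keep careful track of the sign conditions so as not to over- or under-count the solutions.

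Finally, the containment is immediate once $\Fix(h^2)$ is identified. Since $h^2=(B,B,B,B)$ and $B\colon(x_0:x_1)\mapsto(x_1:x_0)$ fixes exactly $(1:1)$ and $(1:-1)$, the locus $\Fix(h^2)$ is cut out by $x_{i0}^2=x_{i1}^2$ for every $i$. As every point of $\Sing(V)$ satisfies these equations, we get $\Sing(V)\subset\Fix(h^2)$, and since $h^2$ is a nontrivial element of $G$, the definition of $\Fix(G)$ yields $\Fix(h^2)\subset\Fix(G)$.
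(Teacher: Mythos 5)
Your proposal is correct, but it takes a genuinely different route from the paper's. The paper dismisses the Fano assertion as trivial (your adjunction argument is the expected justification) and then verifies the claim about $\Sing(V)$ by brute force: it computes the Jacobian ideal chart by chart on the $16$ affine open sets of $X$, doing one chart by hand and delegating the rest to Macaulay2. You instead exploit the structure $F_1=\alpha\beta-i\gamma\delta$, with $\alpha,\gamma$ depending only on the second and third factors and $\beta,\delta$ only on the first and fourth: the multidegree-$(1,1,1,1)$ Euler relations show that $\Sing(V)$ is the common zero locus of the eight partials, and these split into a system linear in $(\alpha,\gamma)$ and one linear in $(\beta,\delta)$, so everything reduces to a rank analysis of two $4\times 2$ matrices. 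I checked your branches and they close as claimed: when $(\alpha,\gamma)\neq 0\neq(\beta,\delta)$ the surviving equations force $\epsilon_1\epsilon_2=\pm i$ with all $\epsilon_j\in\{\pm 1\}$ (this is exactly where the coefficient $i$ enters), and in the two mixed cases the relevant pairs of $2\times 2$ kernel lines are distinct, so the $4\times 2$ matrix has trivial kernel. Your argument is computer-free and explains structurally why the singular points sit where they do; the paper's is mechanical but requires software.

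One caveat: your computation actually proves something sharper than the statement, and your closing claim that your sign conditions are ``the description in the statement'' is not accurate. From $\alpha=\gamma=0$ you get $x_{20}x_{30}=x_{21}x_{31}$ (signs $\epsilon_2=\epsilon_3$), and from $\beta=\delta=0$ you get $x_{10}x_{40}=-x_{11}x_{41}$ (signs $\epsilon_1=-\epsilon_4$); these conditions are strictly stronger than the single condition $x_{10}x_{20}x_{30}x_{40}=-x_{11}x_{21}x_{31}x_{41}$ of the lemma, so $\Sing(V)$ consists of exactly $4$ of the lemma's $8$ points. The other $4$ points, e.g.\ $((1{:}1),(1{:}1),(1{:}-1),(1{:}1))$, where $F_1=4+4i$, do not even lie on $V$; so the lemma's ``is'' should really be ``is contained in''. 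This is a defect of the statement, not of your proof: the paper's own proof likewise only asserts that singular points \emph{satisfy} those conditions, and the inclusion $\Sing(V)\subset\Fix(h^2)\subset\Fix(G)$ --- the only thing used here and later in Theorem~\ref{family} --- follows from your (or the paper's) containment alone.
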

\begin{proof}
The only nontrivial claim is the statement about the singularities, which is checked locally on each of the $16$
affine open sets. In an affine open set, for example, the affine open set given by $x_{11}=x_{21}=x_{31}=x_{41}=1$,
it is easy to see that the singular points satisfy $x_{i0}^2=1$, $\forall_i$ and $x_{10}x_{20}x_{30}x_{40}= -1$.
Checking on all remaining affine open set can be done with Macaulay2.
\end{proof}

\begin{thm}\label{family}
The general element $T$ in the linear system $\LL_{|V}$ is
a simply connected smooth minimal surface of general type with
$p_g(T)=15$, $q(T)=0$ and $K_T^2=48$ on which $G$ acts freely.
The quotient $S:=T/G$ is a minimal surface of general type with
$p_g(S)=q(S)=0$, $K^2_S=3$ and fundamental group $G$.
\end{thm}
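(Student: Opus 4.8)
The plan is to prove the statement about $T$ first, then descend to the quotient $S = T/G$.

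**Setting up $T$ and its invariants.**The plan is to establish the assertions about $T$ first and then descend to $S=T/G$.

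\textbf{The surface $T$ upstairs.} The surface $T$ is a complete intersection in $X=(\PP^1)^4$ of a divisor in $\ls{\OO_X(H)}$ (the Fano $V$, or rather its smooth general avatar) and a divisor $Y\in\LL\subset\ls{\OO_X(2H)}$. First I would argue smoothness of the general $T$. By Lemma~\ref{SingZ1}, $\Sing(V)\subset\Fix(h^2)\subset\Fix(G)$, and by Lemma~\ref{CYlem} together with its corollary the base locus of $\LL$ is disjoint from $\Fix(G)$; hence away from $\Fix(G)$ the system $\LL_{|V}$ is base-point-free on the smooth locus of $V$, and a Bertini argument gives that the general $T$ avoids $\Sing(V)$ and is smooth. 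The numerical invariants then come from adjunction: $T=V\cap Y$ with $V\in\ls{\OO_X(1,1,1,1)}$ and $Y\in\ls{\OO_X(2,2,2,2)}$, so $K_T=(K_X+V+Y)_{|T}=\OO_X(1,1,1,1)_{|T}=H_{|T}$, whence $K_T^2=(H\cdot H\cdot 3H\cdot\text{[deg]})$ is computed by intersecting $H^2\cdot(3H)\cdot(\ldots)$ on $X$, giving $K_T^2=48$ after the standard multidegree count. Since $K_T=H_{|T}$ is ample, $T$ is minimal of general type. Simple connectivity and $q(T)=0$ follow from the Lefschetz hyperplane theorem applied twice; $p_g(T)=h^0(K_T)=h^0(\OO_X(H)_{|T})$ is then pinned down by $\chi$ and Noether's formula, or directly from the restriction sequence, yielding $p_g=15$.

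\textbf{The free action and the descent.} Next I would verify that $G$ acts freely on the general $T$. Both $V$ (by \eqref{eq: equation of the Fano}, which one checks is $G$-invariant up to scalar, i.e.\ $F_1$ is an eigenvector for $g$ and $h$) and $Y\in\LL$ are $G$-invariant, so $G$ acts on $T=V\cap Y$. Freeness is exactly the condition $T\cap\Fix(G)=\emptyset$: by Lemma~\ref{SingZ1} and Lemma~\ref{CYlem} the three $16$-point sets \eqref{FixedPoints} meet neither the general $Y$ nor lie on $T$, so no nontrivial element of $G$ fixes a point of $T$. Granting this, $S=T/G$ is a smooth surface and the invariants descend multiplicatively: $K_S^2=K_T^2/|G|=48/16=3$, and $\chi(\OO_S)=\chi(\OO_T)/|G|=16/16=1$, so $p_g(S)-q(S)=0$. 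Because $T$ is simply connected and $G$ acts freely, $T\to S$ is the universal cover, giving $\pi_1(S)\cong G=\ZZ_4\ltimes\ZZ_4$; in particular $q(S)=0$ (the abelianization of $G$ is finite), forcing $p_g(S)=0$. Ampleness of $K_S$ is inherited from $K_T$, so $S$ is minimal of general type.

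\textbf{Where the difficulty lies.} The routine part is the numerical bookkeeping for $K_T^2$ and $p_g(T)$ and the descent formulae. The genuine obstacle is establishing that the \emph{general} $T$ is smooth \emph{and} misses $\Sing(V)$ simultaneously: one must confirm that $\LL_{|V}$ has no base points on the smooth locus of $V$ outside $\Fix(G)$, and separately that the singular points of $V$ (which lie in $\Fix(h^2)$) are not forced onto every member of $\LL$. This is precisely where Lemma~\ref{CYlem}'s explicit base locus and Lemma~\ref{SingZ1}'s identification $\Sing(V)\subset\Fix(G)$ must be combined: since the base locus of $\LL$ avoids $\Fix(G)$ while $\Sing(V)\subset\Fix(G)$, the two bad loci are disjoint, and a Bertini argument on the smooth quasi-projective variety $V\setminus\Sing(V)$ delivers smoothness of the general $T$ away from $\Sing(V)$, with the finitely many singular points of $V$ avoided because $T$ is cut by a general $Y$ not passing through them. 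Verifying $G$-invariance of $F_1$ and the explicit freeness check on the base points are the only computations I would carry out in detail.
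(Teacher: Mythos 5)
Your overall architecture (Bertini for smoothness, freeness via disjointness of $\operatorname{Bs}\LL$ from $\Fix(G)$, adjunction and Lefschetz for the invariants of $T$, then multiplicative descent to $S=T/G$) is the same as the paper's, and the numerical and descent parts are fine. But there is a genuine gap in your smoothness argument. You claim that ``away from $\Fix(G)$ the system $\LL_{|V}$ is base-point-free on the smooth locus of $V$''; this is false, and nothing in Lemma~\ref{CYlem} or Lemma~\ref{SingZ1} implies it. Lemma~\ref{CYlem} says $\LL$ has $64$ base points, and these are indeed disjoint from $\Fix(G)$ (hence from $\Sing(V)$) --- but some of them do lie on $V$: for instance the point $[(0,1),(0,1),(1,1),(i,1)]$ satisfies $F_1=0$ and belongs to the base locus. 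Every member of $\LL_{|V}$ passes through such a point, so no genericity argument can make $T$ avoid it, and Bertini (which you invoke on $V\setminus\Sing(V)$) gives smoothness only away from $\operatorname{Bs}(\LL_{|V})$. Your argument therefore leaves open the possibility that every member of $\LL_{|V}$ is singular at one of these unavoidable base points; your ``where the difficulty lies'' paragraph identifies the relevant lemmas but resolves the wrong tension (disjointness of $\Sing(V)$ from $\operatorname{Bs}\LL$ handles only the singular points of $V$, not the base points lying on $V$).

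The paper closes exactly this gap: it exhibits the base point above, computes $\frac{\partial Q_4}{\partial x_{40}}=-4i$ and $\frac{\partial F_1}{\partial x_{40}}=-1$ there (so the Jacobian of $(Q_4,F_1)$ has rank $2$ and $(Q_4=0)\cap V$ is smooth at that point), and checks the same at all points of $V\cap\operatorname{Bs}\LL$. Since smoothness of a member of a linear system at a prescribed point is an open condition on the member, the general $T$ is then smooth at all base points, and Bertini handles the complement. Without some such explicit verification at the points of $V\cap\operatorname{Bs}\LL$, your proof of smoothness of the general $T$ is incomplete. The remainder of the proposal --- $K_T=H_{|T}$ ample, $K_T^2=2H^4=48$, $p_g(T)=15$, $q(T)=0$ and simple connectivity by Lefschetz, freeness because the finitely many points of $\Fix(G)$ avoid $\operatorname{Bs}\LL$, and the descent $K_S^2=3$, $\chi(\OO_S)=1$, $\pi_1(S)\cong G$ --- agrees with the paper's proof.
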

\begin{proof}
By Lemma~\ref{SingZ1} and Lemma~\ref{CYlem}, the singular locus of $V$ does not intersect the base locus of $\LL$.
Some points of the base locus of $\LL$ are contained in $V$, for example, on the affine open set
given by $x_{11}=x_{21}=x_{31}=x_{41}=1$, the point $[(0,1),(0,1),(1,1),(i,1)]$ belongs to $V\cap \operatorname{Bs}\LL$.
Nevertheless, at this point, $\frac{\partial Q_4}{\partial x_{10}}=0$, $\frac{\partial Q_4}{\partial x_{40}}=-4i$,
$\frac{\partial F_1}{\partial x_{40}}=-1$,  so $[(0,1),(0,1),(1,1),(i,1)]$ is a smooth point of $(Q_4=0) \cap V$.
A similar computation shows that $(Q_4=0) \cap V$ is smooth at all the base points of $\LL_{|V}$.
By Bertini's theorem, the general element of $\LL_{|V}$ is smooth.
The general $T$ does not contain any of the $48$ points in $\Fix(G)$, so the  $G$-action induced on $T$ is free.

\noindent
By Lefschetz hyperplane section theorem $T$ is simply connected, $q(T)=0$;
then $q(S)=0$ and \mbox{$\pi_1(S)\cong G$}. By adjunction $K_T=H_{|T}$ is ample with self-intersection \mbox{$2 H^4=48$}, and
therefore  $K_S$ is also ample and $K_S^2=48/16=3$. Moreover $p_g(T)=h^0({\mathcal O}_{X}(H))-1=15$,
and hence $\chi({\mathcal O}_S)=\chi({\mathcal O}_T)/16=\frac{1-0+15}{16}=1.$
\end{proof}

Theorem \ref{family} produces a family of minimal surfaces of general type with $p_g=0$ quotients of complete 
intersections $T=\{F=Q=0\}$ in $X$, parametrized by an open subset $P^\circ$
of the $6-$dimensional subspace $P:=\langle F_1 \rangle \times \langle Q_0,\ldots,Q_5\rangle$ of 
$P':=H^0({\mathcal O}_{X}(H)) \oplus H^0({\mathcal O}_{X}(2H))$.

\begin{lem}
The action of $G$ on $X$ lifts to an action on $P'$ such that 
$P$ is the invariant subspace $(P')^G$.
In particular, $\forall v \in P$, $T_vP'$  has an induced $G$-action making the natural identification of $P'$ 
with it $G$-equivariant.
\end{lem}

\begin{proof}
We gave the action of $G$ on $X$ in (\ref{eq: action by Z_4ltimesZ_4}) by giving a lift to 
$H^0({\mathcal O}_{X}(H))$. This determines a lift to $H^0({\mathcal O}_{X}(2H))$ by asking the $G$-equivariance of
the multiplication map 
$H^0({\mathcal O}_{X}(H)) \otimes H^0({\mathcal O}_{X}(H))\rightarrow H^0({\mathcal O}_{X}(2H))$. This lifts the 
action of $G$ to $P'$.

All vectors $v$ in $P$ are eigenvectors, thus 
the action of $G$ on $P$ is six copies of the same $1-$dimensional representation. Twisting by its dual, 
we find a lift fixing all vectors in $P$: $P\subset (P')^G$ and the natural identification of $P'$ 
with $T_vP'$ is $G$-equivariant.

To conclude we show that $\dim (P')^G = 6$. 

First we checked that the trace of the action (\ref{eq: action by Z_4ltimesZ_4}) 
on $H^0({\mathcal O}_{X}(H))$ is zero for every non-trivial element, which implies that it is isomorphic to
the regular representation $\CC[G]$. Since the regular representation is invariant by 1-dimensional twists, also the twisted representation we are 
considering is isomorphic to it, and therefore $H^0({\mathcal O}_{X}(H))^G$ has dimension $1$ and equals 
$\langle F_1 \rangle$.

On the other hand, if $(F,Q)$ is a general point in $P^\circ$,
$\OO_T(2H)\cong \OO_T(2K_T)$. 
Since the action of $G$ on $T$ has no fixed points and $T$ is a surface of general type, by the holomorphic 
Lefschetz fixed point formula
every linearization of the action on $G$ on  $H^0(2K_T)$ is isomorphic to $4$ copies of the 
regular representation; this holds in particular for the linearization induced on the cokernel by the exact sequence
$$
0 \rightarrow F_1 \otimes H^0(\OO_X(H)) \oplus \langle Q \rangle \rightarrow H^0({\mathcal O}_{X}(2H)) 
\rightarrow
H^0(\OO_T(K_T)) \rightarrow 0
$$
and therefore 
$$\dim (P')^G= \dim (F_1 \otimes H^0(\OO_X(H)) \oplus \langle Q\rangle )^G + \dim  H^0({\mathcal O}_{X}(2H))^G=2+4=6.$$
\end{proof}

We show now that all deformations of the complete intersection of an element of $|\OO_X(H)|$ with an element of $|\OO_X(2H)|$ are
obtained by moving the two hypersurfaces in their linear system.

\begin{prop}\label{all defs are embedded}
Let $Y$ be a smooth element of $|{\mathcal O}_X(2H)|$, $V$ be an element of $|{\mathcal O}_{X}(H)|$ such that $T =Y \cap V$ is smooth.
Then all small deformations of $T$ are embedded, i.e. the natural map
$$\delta \colon H^0({\mathcal O}_{V}(H)) \oplus H^0({\mathcal O}_{Y}(2H)) \rightarrow H^1(\Theta_T)$$ is surjective.
Moreover $h^1(\Theta_T)=67$ and $h^2(\Theta_T)=3$.
\end{prop}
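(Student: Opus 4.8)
The plan is to run the standard embedded--deformation argument for the complete intersection $T=V\cap Y\subset X$, reducing all three assertions to cohomology computations on $X=(\P^1)^4$, which are handled by K\"unneth from $h^0(\OO_{\P^1}(n))=\max(n+1,0)$ and $h^1(\OO_{\P^1}(n))=\max(-n-1,0)$. Since $T$ is a smooth complete intersection of divisors of classes $H$ and $2H$, its conormal bundle is $\OO_T(-H)\oplus\OO_T(-2H)$, so $N:=N_{T/X}=\OO_T(H)\oplus\OO_T(2H)$ and there is the tangent--normal sequence
\[
0\to\Theta_T\to\Theta_X|_T\to N\to 0 .
\]
First I would observe that $\delta$ factors as the two restriction maps $H^0(\OO_V(H))\to H^0(\OO_T(H))$ and $H^0(\OO_Y(2H))\to H^0(\OO_T(2H))$ (recording the first--order motion of $V$ and of $Y$) followed by the connecting homomorphism $\partial\colon H^0(N)\to H^1(\Theta_T)$ of the sequence above. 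Thus surjectivity of $\delta$ will follow once I show that both restrictions and $\partial$ are surjective.

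Because $T\in|\OO_V(2H)|$ and $T\in|\OO_Y(H)|$, the two restrictions fit into
\[
0\to\OO_V(-H)\to\OO_V(H)\to\OO_T(H)\to 0,\qquad 0\to\OO_Y(H)\to\OO_Y(2H)\to\OO_T(2H)\to 0,
\]
so I must check $H^1(\OO_V(-H))=H^1(\OO_Y(H))=0$. Resolving $\OO_V$ and $\OO_Y$ by $\OO_X(-H)$ and $\OO_X(-2H)$ reduces these to $H^*(\OO_X(-H))=0$ (each K\"unneth factor is $\OO_{\P^1}(-1)$, which is acyclic), together with $H^2(\OO_X(-2H))=0$ and $H^1(\OO_X(H))=0$, all immediate from K\"unneth. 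For $\partial$ it suffices that $H^1(\Theta_X|_T)=0$: then in the long exact sequence the map $H^1(\Theta_T)\to H^1(\Theta_X|_T)$ vanishes, forcing $\partial$ onto.

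To compute $H^*(\Theta_X|_T)$ I would tensor the Koszul resolution of $\OO_T$ with $\Theta_X\cong\bigoplus_{i=1}^4 p_i^*\OO_{\P^1}(2)$, obtaining
\[
0\to\Theta_X(-3H)\to\Theta_X(-2H)\oplus\Theta_X(-H)\to\Theta_X\to\Theta_X|_T\to 0,
\]
and run the hypercohomology spectral sequence. A K\"unneth count shows $\Theta_X(-H)$ and $\Theta_X(-3H)$ are acyclic (every summand has a factor $\OO_{\P^1}(-1)$), $\Theta_X$ contributes only $h^0=12$, and $\Theta_X(-2H)$ contributes only $h^3=4$ (one unit from each of its four summands $\OO_X$ of multidegree a permutation of $(0,-2,-2,-2)$). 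Since the relevant differentials land in zero groups, this gives $h^0(\Theta_X|_T)=12$, $h^1(\Theta_X|_T)=0$ and $h^2(\Theta_X|_T)=4$; in particular $\partial$, hence $\delta$, is surjective.

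It then remains to compute $H^*(N)=H^*(\OO_T(H))\oplus H^*(\OO_T(2H))$ from the Koszul resolutions of $\OO_T(H)$ and $\OO_T(2H)$ in the same way; the $H^0$ differentials are injective because $f$ and $q$ are coprime (a codimension--two complete intersection), and the surviving terms give $h^*(\OO_T(H))=(15,0,1)$ and $h^*(\OO_T(2H))=(64,0,0)$, so $h^*(N)=(79,0,1)$. These match $\OO_T(H)\cong K_T$ and $\OO_T(2H)\cong 2K_T$ via adjunction, Serre duality and Riemann--Roch, using $p_g(T)=15$, $q(T)=0$, $K_T^2=48$ from Theorem~\ref{family}. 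Feeding $h^*(\Theta_X|_T)=(12,0,4)$ and $h^*(N)=(79,0,1)$ into the long exact sequence, with $H^0(\Theta_T)=0$ since $T$ is of general type, yields $h^1(\Theta_T)=79-12=67$ and $h^2(\Theta_T)=4-1=3$. I expect the main obstacle to be the bookkeeping in the third step, and specifically the verification that $H^1(\Theta_X|_T)=0$: this single vanishing is precisely what forces every small deformation of $T$ to be embedded.
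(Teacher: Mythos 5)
Your proof is correct, and it takes a genuinely different route from the paper's. You treat $T$ in one step as a codimension-two complete intersection in $X$: you compute $h^*(\Theta_X|_T)=(12,0,4)$ by tensoring the Koszul resolution of $\OO_T$ with $\Theta_X$, deduce surjectivity of the connecting map $H^0(N_{T/X})\to H^1(\Theta_T)$ from the single vanishing $H^1(\Theta_X|_T)=0$, and reduce surjectivity of $\delta$ to surjectivity of the two restriction maps $H^0(\OO_V(H))\to H^0(\OO_T(H))$ and $H^0(\OO_Y(2H))\to H^0(\OO_T(2H))$, for which your K\"unneth vanishings $H^*(\OO_X(-H))=0$, $H^2(\OO_X(-2H))=0$, $H^1(\OO_X(H))=0$ are exactly what is needed. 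The paper instead restricts in two stages, $X\supset Y\supset T$: it tabulates the cohomology of $\Theta_{X|Y}$, $\Theta_Y$, $\Theta_Y(-H)$ and $\Theta_{Y|T}$, obtains the numerical conclusions from $0\to\Theta_T\to\Theta_{Y|T}\to\OO_T(H)\to 0$, and proves surjectivity of $\delta$ by a two-step chase: the first component covers the kernel of $H^1(\Theta_T)\to H^1(\Theta_{Y|T})$ via the coboundary of that sequence, while the second composes the surjections $H^0(\OO_Y(2H))\to H^1(\Theta_Y)\to H^1(\Theta_{Y|T})$. Your version is more symmetric in $V$ and $Y$, isolates the key vanishing cleanly, and avoids introducing $\Theta_Y$ altogether; the paper's filtration through $Y$ yields useful intermediate data along the way (e.g. $h^1(\Theta_Y)=68$, $h^2(\Theta_Y)=4$, and implicitly that all small deformations of the Calabi--Yau $Y$ itself are embedded). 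Both arguments rest on the same K\"unneth computations on $(\P^1)^4$, on exactness of the Koszul complex (your ``$f$ and $q$ coprime'' remark is just the regular-sequence property, automatic for a smooth codimension-two intersection), and on $h^0(\Theta_T)=0$ because $T$ is of general type; and your intermediate numbers $h^*(N)=(79,0,1)$ and final values $h^1(\Theta_T)=67$, $h^2(\Theta_T)=3$ agree with the paper's table.
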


\begin{proof}
We compute cohomology groups of sheaves on $Y$ and $T$ using (in this order) the following exact sequences:
\begin{itemize}
\item[] $0 \rightarrow {\mathcal O}_{{X}} \rightarrow {\mathcal O}_{{X}}(2H) \rightarrow {\mathcal O}_Y(2H) \rightarrow 0$, and its twist by $-H$ and by $-2H$,
\item[] $0 \rightarrow {\mathcal O}_Y \rightarrow {\mathcal O}_Y(H) \rightarrow {\mathcal O}_T(H) \rightarrow 0$,
\item[] $0 \rightarrow \Theta_{X}(-2H) \rightarrow \Theta_{X} \rightarrow \Theta_{{X}|Y} \rightarrow 0$, and its twist by $-H$,
\item[] $0 \rightarrow \Theta_Y(-H) \rightarrow \Theta_{{X}|Y} (-H) \rightarrow N_{Y|{X}}(-H) \cong {\mathcal O}_Y(H) \rightarrow 0$,
\item[] $0 \rightarrow \Theta_Y \rightarrow \Theta_{X|Y} \rightarrow {\mathcal O}_Y(2H) \rightarrow 0$,
\end{itemize}
recalling that $\Theta_{X}= {\mathcal O}_{X}(2,0,0,0)\oplus {\mathcal O}_{X}(0,2,0,0) \oplus {\mathcal O}_{X}(0,0,2,0) \oplus {\mathcal O}_{X}(0,0,0,2)$,
and, since $Y$ is a Calabi--Yau $3$-fold, that $h^0(\Theta_Y)=h^3(\Omega^1_Y)=h^{1,3}(Y)=0$. Table~\ref{cohomologies} contains the
result of this computation (empty cells are zeros).
\begin{table}[ht]%[position specifier]
\centering\renewcommand{\arraystretch}{1.3}
\begin{tabular}{lccccc}
 & $h^0$ & $h^1$ & $h^2$ & $h^3$ & $h^4$ \\
\hline \hline
$\Theta_{X}$ & 12 &  &  & &\\ \hline
$\Theta_{X}(-H)$ &  &  &  & & \\ \hline
$\Theta_{X}(-2H)$ &  &  & & 4 & \\ \hline
$\Theta_{X}(-3H)$ &  &  &  & & \\ \hline
${\mathcal O}_Y(2H)$ & 80  &  &  & & \\ \hline
 ${\mathcal O}_Y(H)$& 16 &  &  & & \\ \hline
${\mathcal O}_Y$ & 1 &  &  &1 & \\ \hline
${\mathcal O}_T(H)\cong \omega_T$ & 15 &  & 1 & & \\ \hline
$\Theta_{X|Y}$ &  12 &  & 4 & & \\ \hline
$\Theta_{X|Y}(-H)$ &  &  &  & & \\ \hline
$\Theta_Y(-H)$ &  & 16 &  &  & \\ \hline
$\Theta_Y$ &  & 68 & 4 &  & \\ \hline
$\Theta_Y$ &  & 68 & 4 &  & \\ \hline
&  &  &  &  & \\
\end{tabular}
\caption{Cohomology table}\label{cohomologies}
\end{table}

\noindent
Now, consider the further exact sequence
$$0 \rightarrow \Theta_Y(-H) \rightarrow \Theta_Y \rightarrow \Theta_{Y|T} \rightarrow 0.$$
Then $h^1(\Theta_{Y|T})-h^0(\Theta_{Y|T})=52$ and $h^2(\Theta_{Y|T})=4$.
Finally, from
\begin{equation}\label{last sequence}
0 \rightarrow \Theta_T \rightarrow \Theta_{Y|T} \rightarrow N_{Z|T}={\mathcal O}_T(H) \rightarrow 0,
\end{equation}
recalling that since $T$ is of general type, $h^0(\Theta_T)=0$, we get $h^1(\Theta_T)=67$,  \mbox{$h^2(\Theta_T)=3$}.
\medskip

\noindent
Consider the map, $\delta$, in the statement. It has $2$ components.
The first component is the composition of the restriction map $H^0({\mathcal O}_{V}(H)) \rightarrow H^0({\mathcal O}_{T}(H))$,
which is surjective, with the coboundary map $H^0({\mathcal O}_T(H))\rightarrow H^1(\Theta_T)$ in the cohomology sequence
of (\ref{last sequence}), whose cokernel surjects onto $H^1(\Theta_{Y|T})$. Hence the image of $\delta$ contains the
kernel of the map $H^1(\Theta_T) \rightarrow H^1(\Theta_{Y|T})$.
The composition of the second component with the map $H^1(\Theta_T) \rightarrow H^1(\Theta_{Y|T})$
factors as $H^0({\mathcal O}_{Y}(2H))\rightarrow H^1(\Theta_Y) \rightarrow H^1(\Theta_{Y|T})$ which is a composition
of surjective maps, so surjective. It follows that $\delta$ is surjective.
\end{proof}

We can finally prove our last result.
\begin{thm} \label{comp1} The family of surfaces in Theorem~\ref{family} dominates an
irreducible component of dimension $4$ of the moduli space of minimal surfaces of general type of \mbox{genus $0$}.
\end{thm}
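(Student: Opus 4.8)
The plan is to combine a dimension count for the moduli space with the deformation-theoretic information already gathered in Proposition~\ref{all defs are embedded}. The strategy is standard for families constructed as free quotients of complete intersections: one shows that the family constructed in Theorem~\ref{family} has the expected dimension, and that this dimension matches the dimension of the Kuranishi space of the quotient surface $S$, so that the family fills up a whole irreducible component.

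First I would compute the dimension of the family of surfaces $S=T/G$ produced by Theorem~\ref{family}. The surfaces $T$ are parametrized by the open subset $P^\circ$ of the $6$-dimensional space $P=\langle F_1\rangle\times\langle Q_0,\dots,Q_5\rangle$. To pass from $T$ to $S$ and then to a point of the moduli space, I would quotient out the parameters that do not change the isomorphism class of $S$. These come from two sources: the overall scaling of $F_1$ and of $Q$ (which does not change the zero locus), accounting for $2$ dimensions, and the subgroup of $\Aut(X)$ normalizing $G$ and preserving the invariant linear systems, which acts on $P^\circ$ and whose orbits give isomorphic surfaces. The expected count is $6-2=4$ parameters after projectivizing each factor, matching the claimed dimension $4$; I would verify that the normalizer action does not further reduce the dimension, i.e.\ that a general such $S$ has only finitely many descriptions of this form.

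Next I would identify this dimension with $h^1(S,\Theta_S)$, the dimension of the base of the Kuranishi family of $S$, to conclude that the family dominates an irreducible component. By Proposition~\ref{all defs are embedded}, $h^1(\Theta_T)=67$ and $h^2(\Theta_T)=3$, and the deformation map $\delta$ is surjective, so all deformations of $T$ are embedded. Since the $G$-action on $T$ is free, $\Theta_S$ is the $G$-descent of $\Theta_T$ and $H^i(S,\Theta_S)=H^i(T,\Theta_T)^G$. Thus I would compute the $G$-invariant parts: I expect $h^1(\Theta_T)^G=4$ and $h^2(\Theta_T)^G=0$ (or at least that the obstruction space contributes nothing forcing $S$ to be a smooth point of the moduli space). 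The surjectivity of $\delta$ together with its $G$-equivariance then shows that every first-order deformation of $S$ is realized by moving $F$ and $Q$ inside the invariant linear systems, i.e.\ the Kodaira--Spencer map of our $4$-dimensional family is surjective onto $H^1(\Theta_S)$.

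The main obstacle I anticipate is the equivariant bookkeeping: computing the $G$-invariant subspaces $H^1(\Theta_T)^G$ and $H^2(\Theta_T)^G$ requires tracking the $G$-representation structure through the entire chain of exact sequences in the proof of Proposition~\ref{all defs are embedded}, not merely the dimensions. The cleanest route is to observe that $S$ is a smooth point of its moduli space precisely when $h^2(\Theta_T)^G=0$, and then a surjective Kodaira--Spencer map onto the tangent space $H^1(\Theta_S)$ of dimension equal to the dimension of our family forces the family to dominate a component. I would therefore first establish $h^2(\Theta_T)^G=0$ (unobstructedness of $S$), then match $\dim H^1(\Theta_S)=4$ with the parameter count above, using the $G$-equivariance of $\delta$ from the preceding lemma to guarantee that the invariant part of $\delta$ remains surjective. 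Once these two facts are in place, the standard argument that a family of smooth points whose Kodaira--Spencer map is surjective dominates an irreducible component completes the proof.
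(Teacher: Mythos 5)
Your core mechanism coincides with the paper's: $G$-equivariance of the embedded-deformation map from Proposition~\ref{all defs are embedded}, the descent isomorphisms $H^i(\Theta_S)\cong H^i(\Theta_T)^G$ along the \'etale quotient $T\to S$, and surjectivity of the invariant part of the Kodaira--Spencer map to get dominance. The divergence is in how the number $4$ is pinned down, and there your proposal has a genuine gap: the two statements carrying all the weight in your route, namely $h^1(\Theta_T)^G=4$ and $h^2(\Theta_T)^G=0$, are only announced as ``expected,'' and you yourself flag the equivariant bookkeeping needed to prove them as the main obstacle without resolving it; the same holds for the finiteness-of-fibers check in your parameter count. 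Moreover, your assertion that $S$ is a smooth point of the moduli space ``precisely when'' $h^2(\Theta_T)^G=0$ is false as a biconditional: vanishing of the obstruction space is sufficient for smoothness, not necessary.

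The paper sidesteps all of this. Dominance is deduced exactly as you propose, from surjectivity of the invariant Kodaira--Spencer map $d_v$ --- and note that this step needs no unobstructedness: a map from a smooth germ whose differential surjects onto the Zariski tangent space of the Kuranishi germ is onto that germ. The dimension is then obtained by a sandwich that never computes $h^1(\Theta_S)$ or $h^2(\Theta_S)$: the image of the $6$-dimensional parameter space $P^\circ$ has dimension at most $6-2=4$, because the faithful $\CC^*\times\CC^*$ rescaling of the pair $(F,Q)$ acts trivially on moduli, while every irreducible component of the moduli space of surfaces of general type has dimension at least the expected dimension $10\chi(\OO_S)-2K_S^2=4$; equality follows with no representation theory and no fiber analysis (if fibers were larger, the image would have dimension less than $4$, contradicting the lower bound). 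If you insist on completing your route, the missing computation does not require pushing $G$-representations through the whole cohomology table of Proposition~\ref{all defs are embedded}: by the lemma preceding Theorem~\ref{comp1}, $H^0(\OO_X(H))\cong \CC[G]$ with $F_1$ spanning the trivial summand, so $H^0(\OO_T(H))$ has no invariants, while $H^0(\OO_T(2H))\cong\CC[G]^{\oplus 4}$ has $4$-dimensional invariants; surjectivity of $d_T^G$ then gives $h^1(\Theta_T)^G\le 4$, and combining with $h^1(\Theta_S)-h^2(\Theta_S)=10\chi(\OO_S)-2K_S^2=4$ (Riemann--Roch together with $h^0(\Theta_S)=0$) forces $h^1(\Theta_T)^G=4$ and $h^2(\Theta_T)^G=0$, which is what you wanted.
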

\begin{proof}
Take a smooth surface $T \in \LL_{|V}$ fulfilling the conditions of Theorem \ref{family} and 
consider the exact sequence 
$$
0 \rightarrow \Theta_T \rightarrow \Theta_{X|T} \rightarrow \OO_T(H) \oplus \OO_T(2H) \rightarrow 0.
$$
The action of $G$ on $X$ and $T$ induce actions on $\Theta_T$ and $\Theta_{X|T}$ for which $\Theta_T \rightarrow \Theta_{X|T}$ is 
$G$-equivariant, and therefore induces an action on the cokernel, 
making the coboundary map 
$$d_T \colon H^0( \OO_T(H) \oplus \OO_T(2H)) \rightarrow H^1(\Theta_T),$$ which is the differential of the map 
sending embedded deformations of $T$ to abstract deformations, $G$-equivariant. Since   
$\delta$ factors through $d$, $d$ is surjective and so the induced map among the $G$-invariant subspaces
\begin{equation}\label{d^G}
d_T^G \colon H^0( \OO_T(H) \oplus \OO_T(2H))^G \rightarrow H^1(\Theta_T)^G
\end{equation}
is surjective too.

The \'etale map $\pi \colon T \rightarrow S=T/G$ induces by pull-back an isomorphism
$\Theta_S \rightarrow (\pi_* \Theta_T)^G$. Since $G$ is finite, $\pi_* \Theta_T$ splits 
as direct sum of invariant subsheaves, one for each character of $G$; in particular inducing an isomorphism from
$H^1(\Theta_S)$ to $H^1(\Theta_T)^G$. 

We have then a commutative diagram
\begin{equation*}
\xymatrix{
T_vP\ar[d]\ar^{d_v}[rr]&&H^1(\Theta_S)\ar[d]\\
(T_vP')^G \ar[r]&(H^0( \OO_T(H) \oplus \OO_T(2H)))^G \ar[r]&H^1(\Theta_T)^G
}
\end{equation*}
where $v \in P^\circ$ is a point corresponding to a surface  $S=T/G$, $d_v$ is the differential at $v$ of the map from the family in Theorem 
\ref{family} to the abstract deformations of $S$. Since (recall Proposition \ref{all defs are embedded}) 
both vertical maps are isomorphisms, and the two horizontal maps at the bottom are surjective,
$d_v$ is surjective too. This shows that the family dominates a component of the moduli space.

Note that $P^\circ$ has dimension $6$ and multiplying the two equations by constants gives
a faithful $\CC^* \times \CC^*$ action on $P^{\circ}$ trivial on the moduli space, so this component has dimension at most $6-2=4$.
On the other hand, the expected dimension is $10 \chi-2K_S^2=10-6=4$; hence its dimension is $4$.
\end{proof}

\begin{remark}\label{comp2} A similar
argument shows that also the family given by the action of $\ZZ_2 \oplus Q_8$
dominates an irreducible component of dimension $4$ of the moduli space of minimal surfaces of general type of genus $0$.
As explained by the last two authors in \cite{AnnSNS}, this was proved by Bauer and Catanese in \cite{Burniat3}, where an open set of that family is constructed and studied with a different method. Therefore we decided not to give here the details of our proof of that case.
\end{remark}

{\bf Acknowledgments}. The authors are grateful to Gian Pietro Pirola for helpful remarks, and to an anonymous 
referee for improving some arguments of the last section.

\end{document}